\bmdefine{\bX}{X}
\bmdefine{\ba}{$\alpha$}
\newcommand{\referenza}{}
\newtheorem{thm}{Theorem}[section]
\newtheorem*{thm*}{Theorem \referenza}
\newtheorem*{thm**}{Theorem}
\newtheorem{corollario}[thm]{Corollary}
\newtheorem*{cor*}{Corollary \referenza}
\newtheorem*{lem*}{Lemma \referenza}
\newtheorem*{clm*}{Claim \referenza}
\newtheorem{prop}[thm]{Proposition}
\newtheorem*{prop*}{Proposition \referenza}
\newtheorem*{ass*}{Assumption \referenza}
\newtheorem*{conj*}{Conjecture \referenza}
\newtheorem{rmk}[thm]{Remark}
\newtheorem{defi}[thm]{Definition}
\theoremstyle{plain}
\def \N {\mathbb N}
\def \R {\mathbb R}
\def \C {\mathbb C}
\def \Z {\mathbb Z}
\begin{document}

\title{The prescribed Chern scalar curvature problem}

\author{Elia Fusi}
\address{	Dipartimento di Matematica "Giuseppe Peano"\\
	Universit\`a di Torino,
	Via Carlo Alberto 10,
	10123 Torino, Italy}
\email{elia.fusi@unito.it}

\keywords{Hermitian manifold, Chern scalar curvature, prescribed Chern scalar curvature problem, Chern-Yamabe problem.}
\subjclass[2020]{53C55 (primary), 53C21 (secondary).}

\begin{abstract}
The paper is an attempt to resolve the prescribed Chern scalar curvature problem. We look for solutions within the conformal class of a fixed Hermitian metric. We divide the problem in three cases, according to  the sign of the Gauduchon degree, that we analyse separately. In the case where the Gauduchon degree is negative, we prove that every non-identically zero and non-positive function is the Chern scalar curvature of a unique metric conformal to the fixed one.  Moreover, if there exists a balanced metric with zero Chern scalar curvature, we prove that every smooth function changing sign with negative mean value is the Chern scalar curvature of a metric conformal to the balanced one.
\end{abstract}

\maketitle

\section*{Introduction}
A classical problem in Riemannian Geometry is the prescribed scalar curvature problem. This problem consists in finding  a Riemannian metric on a differentiable manifold such that its scalar curvature coincides with a smooth function previously fixed. The prescribed scalar curvature problem was firstly studied by Kazdan and Warner in \cite{KazWar1} for 2-dimensional compact manifolds and in \cite{KazWar2} for compact manifolds with higher dimension.  The same authors concluded the study of the prescribed scalar curvature problem for compact manifolds in \cite{KazWar3}, also thanks to the proof of the Yamabe problem by Schoen in \cite{Sch}. 
Kazdan and Warner divided the class of compact manifolds into three subclasses depending on the set  of smooth functions that are scalar curvature of a Riemannian metric.\begin{thm**}[See \cite{KazWar3}, Theorem 5.1 and 6.4]The class of compact differentiable manifolds of dimension 2 can be divided into three subclasses:\begin{itemize}\item every smooth function that is positive somewhere is the scalar curvature of a Riemannian metric,\item the zero function and all the smooth functions changing sign are scalar curvature of a Riemannian metric,\item every smooth function that is negative somewhere is the scalar curvature of a Riemannian metric. \end{itemize}The class of compact differentiable manifolds of dimension at least 3 can be divided into three subclasses:\begin{itemize}\item every smooth function is the scalar curvature of a Riemannian metric,\item the zero function and all the smooth functions that are negative somewhere are scalar curvature of a Riemannian metric, \item every smooth function that is negative somewhere is the scalar curvature of a Riemannian metric.\end{itemize} \end{thm**}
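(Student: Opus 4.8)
The plan is to treat the two assertions (dimension $2$, dimension $\ge 3$) in parallel, in each case separating the necessity of the sign conditions from their sufficiency, and reducing sufficiency to a semilinear elliptic equation for a conformal factor. For \emph{necessity} in dimension $2$ I would invoke Gauss--Bonnet: the total curvature $\int_M K_g\,dV_g$ equals $2\pi$ times the Euler characteristic $\chi(M)$ and hence has its sign, which forces a curvature function to be positive somewhere when $\chi(M)>0$, negative somewhere when $\chi(M)<0$, and identically zero or sign-changing when $\chi(M)=0$. In dimension $n\ge 3$ the obstruction is instead carried by the first eigenvalue of the conformal Laplacian $L_g=-\tfrac{4(n-1)}{n-2}\Delta_g+R_g$, whose sign is a conformal invariant coinciding with that of the Yamabe invariant of $[g]$: if a metric has $R_g\ge 0$ with $R_g\not\equiv 0$ then $L_g$ has positive first eigenvalue, so $[g]$ has positive Yamabe invariant and (by the resolution of the Yamabe problem) $M$ carries a metric of positive scalar curvature, while if $R_g\equiv 0$ then the Yamabe invariant vanishes and $M$ carries a scalar-flat metric. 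Thus a manifold admitting no positive-scalar-curvature metric cannot realize a nonnegative function other than $0$, and one admitting neither a positive nor a scalar-flat metric can realize only functions that are negative somewhere.

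For \emph{sufficiency} I would fix a convenient background and prescribe the curvature through a conformal factor. In dimension $2$, uniformization lets me take the background $g_0$ to have constant curvature with the sign of $\chi(M)$, and $K_{e^{2u}g_0}=f$ becomes
\[
-\Delta_{g_0}u+K_{g_0}=f\,e^{2u};
\]
in dimension $n\ge 3$, after a Yamabe normalization the background $g$ has constant scalar curvature and $R_{u^{4/(n-2)}g}=f$ becomes
\[
-\tfrac{4(n-1)}{n-2}\Delta_g u+R_g u=f\,u^{(n+2)/(n-2)},\qquad u>0 .
\]
When $f$ has the sign fitting its case — negative everywhere, or positive everywhere in positive Yamabe type — a solution comes directly: constants (or first-eigenfunction barriers) and the monotone iteration scheme handle the negative situations, while the positive ones are solved variationally via the associated energy functional, which is exactly where the positive-scalar-curvature, resp. $\chi(M)>0$, hypothesis enters. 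The case $\chi(M)=0$ is separate: one solves $-\Delta_{g_0}u=f\,e^{2u}$ for sign-changing $f$ by a constrained variational argument resting on the Moser--Trudinger inequality.

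The substantial case is a function $f$ that is negative somewhere but not everywhere. Here the naive barriers collapse, and I would follow Kazdan--Warner's device of first repositioning the background: by a conformal change, and if needed a diffeomorphism — using also that every compact $n$-manifold with $n\ge 3$ carries a metric of constant negative scalar curvature — one arranges the background's own curvature to dominate $f$ from above in a suitable weighted sense, which restores ordered sub- and supersolutions and hence a solution. Once $0$ and all functions negative somewhere are realizable on every $M$, the first two classes separate exactly according to the existence of a positive-scalar-curvature metric, and the positive branches are closed by solving the displayed equations for $f$ positive somewhere. In the surface case $\chi(M)>0$ (the Nirenberg problem) this last step must contend with the Kazdan--Warner identity $\int_{S^2}\langle\nabla f,\nabla x_j\rangle\,e^{2u}\,dV_{g_0}=0$, which forbids solvability inside the round conformal class; the way around it is to exploit the non-compactness of the conformal group of $S^2$, prescribing $f\circ\psi$ for a well-chosen Möbius transformation $\psi$ and combining this with a blow-up/variational analysis.

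The hard part of the whole argument is precisely this: realizing a function that is negative only on a small set, together with the Nirenberg problem on $S^2$. For such functions every direct sub/supersolution or variational scheme breaks down, and the entire difficulty lies in first maneuvering the background geometry into a position from which the prescribed-curvature equation can be solved.
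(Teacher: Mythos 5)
A preliminary remark: the paper contains no proof of this statement --- it is quoted as background from Kazdan--Warner, with the references \cite{KazWar1} and \cite{KazWar3} standing in for the proof --- so your proposal can only be measured against the classical argument, not against anything in the text. Your overall architecture (Gauss--Bonnet, respectively the sign of the first eigenvalue of the conformal Laplacian, for necessity; sub/supersolutions, constrained variational arguments, and a repositioning of the background metric for sufficiency) is indeed the Kazdan--Warner strategy.

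There is, however, one concrete step that fails. On $S^2$ you propose to evade the Kazdan--Warner identity by prescribing $f\circ\psi$ for a well-chosen M\"obius transformation $\psi$. Precomposition with a \emph{conformal} diffeomorphism cannot help: if $e^{2u}g_0$ has Gaussian curvature $f\circ\psi$, then $(\psi^{-1})^*(e^{2u}g_0)$ lies again in the round conformal class and has curvature $f$, so solvability for $f\circ\psi$ within the round class is equivalent to solvability for $f$, and the obstruction (e.g.\ for $f=2+x_1$) persists unchanged. What actually closes the $S^2$ case --- and, in all dimensions, the case of functions negative only on a small set --- is composition with \emph{arbitrary} diffeomorphisms, made effective by the approximation theorem \cite[Theorem 2.1]{KazWar3} describing the $L^p$-closure of the orbit $\{f\circ\varphi \ : \ \varphi\in\mathrm{Diff}(M)\}$; the solution metric then lies in a different conformal class, which is permitted because the theorem asks for \emph{some} metric. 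This orbit-closure theorem, which the paper's introduction explicitly singles out as the key tool, is absent from your sketch, and the phrase about making the background curvature ``dominate $f$ from above in a suitable weighted sense'' is not a substitute for it; the same diffeomorphism step is also needed in the $\chi(M)=0$ surface case, where one must first arrange $\int_M (f\circ\varphi)\,dV_{g_0}<0$ before the constrained variational argument applies. (A smaller point: for $R_g\ge 0$, $R_g\not\equiv 0$, the existence of a positive-scalar-curvature metric follows from the first-eigenfunction conformal change alone, so invoking the resolution of the Yamabe problem there is unnecessary, although Schoen's theorem does enter elsewhere in the dimension $\ge 3$ classification.)
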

In this paper, we study the prescribed Chern scalar curvature problem, namely the Hermitian analogue of the prescribed scalar curvature problem. It consists in proving the existence of a Hermitian metric on a complex manifold such that its scalar curvature with respect to the Chern connection coincides with a fixed smooth real-valued function. 
The motivation of our study comes from the recent studies on Chern-Yamabe problem (see \cite{AnCaSp}, \cite{CZ}, \cite{LM},\cite{Ho}, \cite{HS} and \cite{LU} in the almost-Hermitian setting). In particular, in \cite{Ho},  the prescribed Chern scalar curvature problem is also considered. The author uses the so called Chern-Yamabe flow, firstly introduced in \cite{CZ}, to prove, supposing there exists a balanced metric $\omega_0$ with negative Chern scalar curvature, that every negative function is the Chern scalar curvature of a metric conformal to $\omega_0$, see \cite[Theorem 1.1]{Ho}. The objective of our work is to generalize the result mentioned before and  study  the prescribed Chern scalar curvature more in depth. 

The paper is divided in two sections. In the first one, we define the tools that we used to prove our results, like the Chern Laplacian and the Gauduchon degree of a conformal class, and we recall the results concerning the Chern-Yamabe problem, see \cite{AnCaSp}. In the second section, we study the prescribed Chern scalar curvature on connected, compact complex manifolds with complex dimension at least 2. Our first approach is that of finding solutions within the conformal class of a fixed Hermitian metric $\omega$. In this setting, we see  that the resolution of the problem is equivalent to solving a non-linear elliptic PDE of 2nd order. Together with this equation, we find an obstruction that suggests a division of the problem into 3 cases depending on the sign of the Gauduchon degree of the conformal class of $\omega$. We analyse separately each case. 

In the case where $\Gamma(\{\omega\})<0$, we find a necessary condition on a smooth function in order for it to be Chern scalar curvature of a metric conformal to $\omega$. We observe that  the smooth functions that are non-identically zero and non-positive satisfy this condition. Our first result states that all these functions are Chern scalar curvature of a unique Hermitian metric in the conformal class of $\omega$.\begin{thm**}[See Theorem \ref{thm:2.5}]Let $M^n$ be a connected, compact complex manifold with $n\ge2$ endowed with a Hermitian metric $\omega$ such that $\Gamma(\{\omega\})<0$. If $g\in C^{\infty}(M,\R)\setminus\{0\}$ is such that $g\le0$, then $g$ is the Chern scalar curvature of a unique Hermitian metric conformal to $\omega$. \end{thm**}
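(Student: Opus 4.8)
\emph{Reduction to a scalar PDE.} For $u\in C^\infty(M,\R)$ write $\omega_u:=e^{2u/n}\omega$; the conformal transformation rule for the Chern scalar curvature (Section~1) says that $g$ is the Chern scalar curvature of $\omega_u$ precisely when $\Delta_\omega u = S_\omega - g\,e^{2u/n}$, where $\Delta_\omega$ is the Chern Laplacian of $\omega$. A metric is conformal to $\omega$ iff it is conformal to any representative of $\{\omega\}$; since $\Gamma(\{\omega\})<0$, I would first pass, via the solution of the Chern--Yamabe problem in the negative case (recalled in Section~1, cf.\ \cite{AnCaSp}), to a representative with \emph{constant} Chern scalar curvature $S_\omega\equiv\kappa$, necessarily $\kappa<0$. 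The task becomes to solve
\[
  \Delta_\omega u \;=\; \kappa - g\,e^{2u/n},\qquad \kappa<0,\ g\le 0,\ g\not\equiv 0 .
\]
Pairing this with the positive function $\rho$ spanning the kernel of the formal $L^2(\omega^n)$--adjoint of $\Delta_\omega$ gives $\int_M g\,e^{2u/n}\rho\,\omega^n=\kappa\int_M\rho\,\omega^n<0$, which is the necessary condition alluded to above and is satisfied by every non-positive $g\not\equiv 0$. Note also that the nonlinearity $F(x,t)=\kappa-g(x)e^{2t/n}$ is non-decreasing in $t$, and strictly increasing wherever $g<0$.

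\emph{Existence by regularisation.} My plan is to obtain a solution as a limit of solutions of the regularised equations $\Delta_\omega u=\kappa-g_\varepsilon e^{2u/n}$ with $g_\varepsilon:=g-\varepsilon<0$, $\varepsilon>0$. Each of these has \emph{constant} barriers: a sufficiently negative constant is a sub-solution (there $g_\varepsilon e^{2u/n}\to 0<-\kappa$), and a sufficiently large constant is a super-solution (since $\max_M g_\varepsilon<0$, so $g_\varepsilon e^{2u/n}\to-\infty$). As $\Delta_\omega-K$ obeys the maximum principle for $K>0$, the monotone iteration scheme between these barriers produces a solution $u_\varepsilon$, smooth by elliptic regularity. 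Because $g_{\varepsilon'}\ge g_\varepsilon$ for $\varepsilon'<\varepsilon$, the function $u_{\varepsilon'}$ is a super-solution of the $\varepsilon$-equation, whose nonlinearity is strictly increasing; the comparison principle then gives $u_\varepsilon\le u_{\varepsilon'}$, so $(u_\varepsilon)$ increases as $\varepsilon\downarrow 0$. Once the $u_\varepsilon$ are bounded in $C^0$ independently of $\varepsilon$, the equation bounds $\Delta_\omega u_\varepsilon$ in $C^0$, elliptic ($L^p$ and Schauder) estimates bound $u_\varepsilon$ in $C^\infty$, and the monotone, uniformly bounded family converges in $C^\infty$ to a solution $u$ of the reduced equation; $\omega_u$ then has Chern scalar curvature $g$.

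\emph{The main obstacle: uniform a priori bounds.} The heart of the argument is the $\varepsilon$-independent $C^0$ bound. The lower bound is immediate: at a minimum point of $u_\varepsilon$ the maximum principle applied to the $\varepsilon$-equation forces $|g_\varepsilon|\,e^{2u_\varepsilon/n}\ge|\kappa|$ there, and since $\sup_\varepsilon\|g_\varepsilon\|_{C^0}<\infty$ this yields $\min_M u_\varepsilon\ge -C$. The \emph{upper} bound is the delicate part: the naive maximum principle only gives $|g_\varepsilon|\,e^{2u_\varepsilon/n}\le|\kappa|$ at a maximum, which is empty on $\{g\approx 0\}$. Instead I would integrate the $\varepsilon$-equation against $\rho$ to obtain $\int_M|g_\varepsilon|\,e^{2u_\varepsilon/n}\rho\,\omega^n=|\kappa|\int_M\rho\,\omega^n$, hence, using $|g_\varepsilon|\ge|g|$,
\[
  \int_{\{g\le -\delta\}}e^{2u_\varepsilon/n}\rho\,\omega^n\;\le\;\frac{|\kappa|}{\delta}\int_M\rho\,\omega^n\qquad(\delta>0\text{ fixed}),
\]
uniformly in $\varepsilon$, where $\{g\le -\delta\}$ has positive measure for small $\delta$ because $g\not\equiv 0$. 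This $L^1$-control of $e^{2u_\varepsilon/n}$ on a fixed set, combined with $\Delta_\omega u_\varepsilon\ge\kappa$ being bounded below — so that $u_\varepsilon-\min_M u_\varepsilon$ is a non-negative supersolution to which the weak Harnack inequality applies — and with interior elliptic estimates, should propagate to a uniform bound $u_\varepsilon\le C$ on all of $M$: morally, $u_\varepsilon$ being large somewhere forces $\Delta_\omega u_\varepsilon$ to be large there, which is incompatible with $\int_M(\Delta_\omega u_\varepsilon)\rho\,\omega^n=0$ together with the lower bound $\Delta_\omega u_\varepsilon\ge\kappa$. I expect this transfer of the estimate from $\{g<0\}$ to the whole manifold — a Moser/Harnack iteration adapted to the non-self-adjoint operator $\Delta_\omega$ — to be the technical core of the proof.

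\emph{Uniqueness.} Let $u_1,u_2$ both solve the reduced equation and put $w:=u_1-u_2$, so that $\Delta_\omega w=-g\,(e^{2u_1/n}-e^{2u_2/n})$. At a maximum point of $w$ with value $m$: if $m>0$ then $w>0$ near that point, so there $-g\,(e^{2u_1/n}-e^{2u_2/n})\ge 0$ and $w$ is subharmonic; by the strong maximum principle $w\equiv m$ on an open-and-closed set, hence on all of $M$, and then $0=\Delta_\omega w=-g\,e^{2u_2/n}(e^{2m/n}-1)$ everywhere forces $g\equiv 0$, a contradiction. Thus $\max_M w\le 0$, and symmetrically $\min_M w\ge 0$, so $u_1=u_2$ and the conformal metric realising $g$ is unique.
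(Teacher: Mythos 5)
Your reduction, your regularisation scheme, the lower bound $\min_M u_\varepsilon\ge -C$, the integral identity against the eccentricity function, and your uniqueness argument are all fine (uniqueness in the paper is also just the maximum principle, and your open-closed argument handles the degenerate set $\{g=0\}$ correctly). The genuine gap is exactly the step you flag as the "technical core": the uniform upper bound on $u_\varepsilon$, and the mechanism you propose for it does not work. The weak Harnack inequality applies to the non-negative supersolution $u_\varepsilon-\min_M u_\varepsilon$ (here $\Delta^{Ch}_\omega u_\varepsilon\ge\kappa$), but it only controls small $L^p$-means by infima; it never bounds suprema of supersolutions. What you would need is the local maximum principle for \emph{sub}solutions, and that requires the right-hand side $\kappa-g_\varepsilon e^{2u_\varepsilon/n}$ in $L^q$ with $q>n$, whereas your integral identity only gives a uniform $L^1$ bound. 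The heuristic "$u_\varepsilon$ large forces $\Delta^{Ch}_\omega u_\varepsilon$ large there" fails precisely on $\{g=0\}$, where the equation exerts no pointwise upper control (there $-g_\varepsilon\approx\varepsilon\to 0$); and in general a family of supersolutions that is bounded below and has uniformly $L^1$-bounded Laplacian can still develop arbitrarily tall peaks (truncations of a Green's function give such a family), so no Moser/Harnack iteration based only on the data you have can close this step. Note also that the natural comparison argument bounding $u_\varepsilon$ from above would use a supersolution of the limit equation --- i.e.\ essentially the object whose existence you are trying to prove.

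This is exactly where the paper's proof does its work, and it avoids a priori estimates altogether: it solves the linear problem $\Delta_\omega^{Ch}\phi=g-\frac{1}{Vol(M,\omega)}\int_Mgf_0\frac{\omega^n}{n!}$ (solvable because the right-hand side is $L^2$-orthogonal to $\ker(\Delta^{Ch}_\omega)^*=\R f_0$), and checks that $u_+=k_1\phi+k_2$ is an explicit supersolution of equation (\ref{eq:3}) once $k_1\ge Vol(M,\omega)\Gamma(\{\omega\})/\int_Mgf_0\frac{\omega^n}{n!}>0$ and $k_2\ge\sup_M\left(\frac{n}{2}\log k_1-k_1\phi\right)$; here the sign hypothesis $g\le0$, $g\not\equiv0$ enters through $\int_Mgf_0\frac{\omega^n}{n!}<0$ and through $g\left(k_1-e^{2u_+/n}\right)\ge0$. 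A constant subsolution below $u_+$ always exists in the negative-degree case (Corollary \ref{cor:2.4}), and the sub/supersolution theorem (Theorem \ref{thm:2.3}, monotone iteration) then produces the smooth solution directly, with no need for the $\varepsilon$-regularisation. If you want to keep your scheme, the repair is to use this same $u_+$ as a uniform upper barrier for the $u_\varepsilon$ via the comparison principle (the $\varepsilon$-nonlinearity is strictly monotone), but at that point you have imported the paper's key construction and the regularisation is redundant.
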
 A stronger necessary condition is obtained in Proposition \ref{prop:2.6}. It allows us to construct a function which satisfies the first necessary condition but which cannot be Chern scalar curvature of any metric conformal to $\omega.$ Because of this, we try to find solutions within the set of metrics conformally equivalent to $\omega$. In this case, we find two sufficient conditions on a smooth function to be Chern scalar curvature of a metric conformally equivalent to $\omega$, see Proposition \ref{prop:2.8} and Proposition \ref{prop:2.9}.

In the case where $\Gamma(\{\omega\})=0$, first of all, we find two necessary conditions. Then, under a suitable assumption, we interpret the prescribed Chern scalar curvature equation as the Euler-Lagrange equation, with respect to the standard $L^2$ pairing, associated to an appropriate functional. This assumption concerns the existence of a balanced metric with zero Chern scalar curvature. Using  variational methods, we prove the following Theorem. \begin{thm**}[See Theorem \ref{thm:2.11}]Let $M^n$ be a connected, compact complex manifold with $n\ge2$. Suppose there exists a balanced metric $\omega$ with $S^{Ch}(\omega)=0$. If $g\in C^{\infty}(M,\R) $ changes sign on $M$ and $\int_Mg\frac{\omega^n}{n!}<0,$ then $g$ is the Chern scalar curvature of a Hermitian metric conformal to $\omega.$ \end{thm**}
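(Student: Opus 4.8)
The plan is to turn the problem into one semilinear elliptic PDE through the conformal transformation law for the Chern scalar curvature, and then to solve it by constrained minimisation. Writing a metric conformal to $\omega$ as $\omega_u=e^{2u/n}\omega$ with $u\in C^\infty(M,\R)$, the transformation law recalled in Section 1 together with the hypothesis $S^{Ch}(\omega)=0$ turns the equation $S^{Ch}(\omega_u)=g$ into
\[
\Delta^{Ch}_\omega u = g\,e^{2u/n},
\]
where $\Delta^{Ch}_\omega$ is the Chern Laplacian of $\omega$. Since $\omega$ is balanced, $\Delta^{Ch}_\omega$ is the Hodge--de Rham Laplacian of the underlying Riemannian metric: a non-negative, self-adjoint, second-order elliptic operator with kernel the constants, so the integrations by parts below are licit and, integrating the equation, $\int_M g\,e^{2u/n}\,\frac{\omega^n}{n!}=0$ — which can be arranged only because $g$ changes sign, recovering one of the necessary conditions for $\Gamma(\{\omega\})=0$.

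For existence I would minimise the Dirichlet energy $\mathcal E(u)=\tfrac12\int_M|\nabla u|^2\,\frac{\omega^n}{n!}$ over
\[
\mathcal C=\Bigl\{\,u:\ \int_M u\,\tfrac{\omega^n}{n!}=0,\quad \int_M g\,e^{2u/n}\,\tfrac{\omega^n}{n!}=0\,\Bigr\},
\]
the normalisation $\int_M u=0$ being harmless since both $\mathcal E$ and the exponential constraint are invariant under adding a constant. The set $\mathcal C$ is non-empty because $g$ is positive somewhere: along $t\mapsto t\chi$ with $\chi\ge0$ supported in $\{g>0\}$, the function $t\mapsto\int_M g\,e^{2t\chi/n}\,\frac{\omega^n}{n!}$ is negative at $t=0$ (as $\int_M g\,\frac{\omega^n}{n!}<0$) and tends to $+\infty$, hence vanishes for some $t$, after which one subtracts the mean. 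A minimiser $u_0$ solves the Euler--Lagrange equation $\Delta^{Ch}_\omega u_0=\kappa\,g\,e^{2u_0/n}$ for a real Lagrange multiplier $\kappa$ — the multiplier for the constraint $\int_M u=0$ being forced to vanish after integrating, as $g\,e^{2u_0/n}$ has zero mean on $\mathcal C$ — and a standard elliptic bootstrap makes $u_0$ smooth once $e^{2u_0/n}$ is known to lie in every $L^p$.

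It then remains to determine the sign of $\kappa$ and to absorb it. No constant belongs to $\mathcal C$ (for a constant $a$ one has $\int_M g\,e^{2a/n}=e^{2a/n}\int_Mg\neq0$), so $u_0$ is non-constant, $\int_M|\nabla u_0|^2>0$, and in particular $\kappa\neq0$. Testing the Euler--Lagrange equation against $e^{-2u_0/n}$ and integrating by parts gives
\[
-\frac{2}{n}\int_M e^{-2u_0/n}\,|\nabla u_0|^2\,\frac{\omega^n}{n!}\;=\;\kappa\int_M g\,\frac{\omega^n}{n!},
\]
whose left-hand side is strictly negative; since $\int_M g\,\frac{\omega^n}{n!}<0$ by hypothesis, $\kappa>0$. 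Then $u:=u_0+\tfrac n2\log\kappa$ satisfies $\Delta^{Ch}_\omega u=g\,e^{2u/n}$, so $\omega_u=e^{2u/n}\omega$ is the desired Hermitian metric conformal to $\omega$.

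The real obstacle is the compactness step — showing that the infimum of $\mathcal E$ over $\mathcal C$ is attained (and positive). A minimising sequence is bounded in $H^1$ by Poincaré, hence converges weakly in $H^1$ and a.e., and $\mathcal E$ is weakly lower semicontinuous; but the nonlinearity $e^{2u/n}$ is supercritical for $H^1$ in real dimension $2n\ge4$, so it is unclear that $u\mapsto\int_M g\,e^{2u/n}$ passes to the limit, i.e.\ that the weak limit still lies in $\mathcal C$ (equivalently, that $e^{2u_0/n}$ is as integrable as is needed for regularity). Overcoming this — e.g.\ by running the minimisation in a more regular class enjoying an Adams--Moser--Trudinger-type inequality, or by truncating/approximating $g$ and deriving uniform a priori bounds for the approximate solutions — is where the substance of the proof lies; the remainder is the soft direct method together with the sign bookkeeping above.
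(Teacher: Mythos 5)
Your overall route is the paper's own proof of Theorem \ref{thm:2.11}: since $S^{Ch}(\omega)=0$ and $\omega$ is balanced, the problem reduces to $\Delta^{Ch}_{\omega}u=g\exp(\frac{2u}{n})$ with $\Delta^{Ch}_{\omega}=\Delta_{\omega}$, and one minimizes the Dirichlet energy over the constraint set $\{v\in W^{1,2}(M)\ :\ \int_Mg\exp(\frac{2v}{n})\frac{\omega^n}{n!}=0,\ \int_Mv\frac{\omega^n}{n!}=0\}$, kills the multiplier of the mean-zero constraint by testing with $\varphi=1$, determines the sign of the remaining multiplier by testing with $\exp(-\frac{2v}{n})$ and using $\int_Mg\frac{\omega^n}{n!}<0$, absorbs it by adding the constant $\frac{n}{2}\log\kappa$, and bootstraps to smoothness. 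All of these soft steps in your write-up are correct and coincide with the paper's (the paper phrases the multiplier as $\lambda<0$ with $-\lambda=\frac n2\exp(\frac{2\gamma}{n})$, which is the same bookkeeping).

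The genuine gap is the one you flag yourself: you do not prove that the infimum over $\mathcal C$ is attained, i.e.\ that the constraint $\int_Mg\exp(\frac{2v_k}{n})\frac{\omega^n}{n!}=0$ survives passage to the weak $W^{1,2}$ limit of a minimizing sequence, nor that $\exp(\frac{2v}{n})$ of the limit is integrable enough to start the elliptic bootstrap; as written the proposal is therefore incomplete. For comparison, the paper settles this point in one line: it asserts that $\exp(-)\colon W^{1,2}(M)\to W^{1,2}(M)$ is continuous ``by straightforward calculations'' and deduces that the weak limit $v$ lies in $B$, then uses weak lower semicontinuity of the energy. No Moser--Trudinger/Adams-type inequality or uniform higher integrability is invoked there, so the paper does not contain an extra idea for you to import; your worry about the exponential being supercritical for $W^{1,2}$ in real dimension $2n\ge4$ is aimed exactly at the step the paper dispatches most quickly (note that weak $W^{1,2}$ convergence together with norm-continuity of $\exp$ does not by itself give convergence of the nonlinear constraint integral, and in real dimension at least four a general $W^{1,2}$ function need not even have $\exp(\frac{2v}{n})\in L^1$). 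To close your argument one must substantiate precisely this compactness/continuity of $v\mapsto\int_Mg\exp(\frac{2v}{n})\frac{\omega^n}{n!}$ along the minimizing sequence, e.g.\ via uniform exponential integrability estimates of the kind you suggest.
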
 The case where $\Gamma(\{\omega\})>0$ is the most difficult among the three. In this case, we find a necessary condition and we prove a local result by applying the implicit function Theorem, see Proposition \ref{prop:2.14}.

After our work, there are many questions that remain open. One of the most important result that Kazdan and Warner used to conclude the prescribed scalar curvature problem is \cite[Theorem 2.1]{KazWar3}.  Also in our setting, it can be useful to know the orbit of a smooth function under the group of biholomorphisms in order to understand when Proposition \ref{prop:2.8}, Proposition \ref{prop:2.9} and Proposition \ref{prop:2.12} can be applied. Once we know this, we can understand better which functions are Chern scalar curvature of a metric conformally equivalent to $\omega$ and then conclude using \cite[Theorem 1.1]{Yan}. Another open question is the generalization of Theorem \ref{thm:2.11}, removing the hypothesis that there exists a balanced metric. The last open question is the problem of existence of metric with constant and positive Chern scalar curvature within a conformal class with positive Gauduchon degree.
\bigskip 

{\itshape Acknowledgements.} The author is very grateful to Daniele Angella for his supervision, support and stimulating discussions over months. Many thanks are also due to Francesco Pediconi for several discussions, suggestions and his interest on the subject. 

The author is supported by GNSAGA of INdAM.

\section{Preliminaries}
Let $M$ be a connected, compact complex manifold of dimension $n$. In our discussion, we will identify Hermitian metrics $h$ on $M$ with their fundamental (1,1)-forms $\omega=h(J-,-).$ For any Hermitian structure $(J,\omega)$ on $M$, we can consider the Chern connection $\nabla^{Ch}$, that is, the unique affine connection preserving both the Hermitian metric and the almost-complex structure, i.e. $\nabla^{Ch}h=0,\nabla^{Ch}J=0$, whose (0,1)-part coincides with the Cauchy-Riemann operator $\bar{\partial}$ associated with the holomorphic structure of $T^{1,0}M$. Given a Hermitian metric $\omega$ on $M$ and $\{z_1,\ldots,z_n\}$ local holomorphic coordinates, we can write $\omega=\sqrt{-1}\omega_{i\bar{j}}dz_i\wedge d\overline{z_j} $ where $\omega_{i\bar{j}}=h(\frac{\partial}{\partial z_i},\frac{\partial}{\partial \overline{z_j}})$. Then, we can define the Chern scalar curvature of $\omega$ as the function \[S^{Ch}(\omega)=tr_{\omega}Ric^{(1)}(\omega)=\omega^{i\bar{j}}\omega^{k\bar{l}}R_{i\bar{j}k\bar{l}},\] where $R_{i\bar{j}k\bar{l}}$ are the components of the Chern curvature tensor and $Ric^{(1)}(\omega)$ is called the first Chern-Ricci form and it is the $(1,1)$-form that, locally, can be written  as follows: \[Ric^{(1)}(\omega)=\sqrt{-1}\omega^{k\bar{l}}R_{i\bar{j}k\bar{l}}dz_i\wedge d\overline{z_j}.\] 

Given a Hermitian metric $\omega$ on $M$, we can define a differential operator called Chern Laplacian $\Delta_{\omega}^{Ch}$ associated to $\omega$ that acts on a smooth function $u\in C^{\infty}(M,\R)$ as follows:\[\Delta_{\omega}^{Ch}u=2\sqrt{-1}tr_{\omega}\bar{\partial}\partial u.\] Locally, we obtain that\[\Delta_{\omega}^{Ch}u=-2\omega^{i\bar{j}}\frac{\partial^2u}{\partial z_i\partial \overline{z_j}}, \ \ \forall u\in C^{\infty}(M,\R).\] It is well known that a Riemannian metric induces an inner product on the algebra of smooth differential forms on $M$. In the following, if $ \alpha$ and $\beta$ are $k$-forms on $M$ and $\omega $ is the fundamental form associated to a Hermitian metric $h$, we will set  $\omega(\alpha,\beta)=h(\alpha,\beta)$. 

Observe that the Chern Laplacian is an elliptic differential operator of 2nd order. On a compact Hermitian manifold $(M,\omega)$ both the Hodge Laplacian associated to the metric, i.e $\Delta_{\omega}=[d,d^*]$,  and the Chern Laplacian are defined. The difference between these two operators on smooth functions is quantified in a result due to Gauduchon. \begin{thm}[\cite{Gauduchon1}, p. 502-503]\label{thm:1.1}Let $(M^n,\omega)$ be a compact Hermitian manifold. Then, we have\[\Delta_{\omega}^{Ch}u=\Delta_{\omega}u+ \omega(du,\theta), \ \ \ \forall u\in C^{\infty}(M,\R),\] where $\theta$ is called the torsion 1-form associated to $\omega$ and it is defined by the relation: $d\omega^{n-1}=\theta\wedge \omega^{n-1}$. Moreover, we have that the formal adjoint of $\Delta_{\omega}^{Ch}$ acts on $u\in C^{\infty}(M,\R)$ as follows:\[(\Delta_{\omega}^{Ch})^*u=\Delta_{\omega}u-\omega(du,\theta)+ud^*\theta.\] \end{thm}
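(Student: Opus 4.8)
The plan is to establish the first identity pointwise, by converting the definition of $\Delta_{\omega}^{Ch}$ into an identity between differential forms and then applying the Leibniz rule; the formula for the formal adjoint will then follow by a routine integration by parts. Set $d^{c}u:=\sqrt{-1}(\partial-\bar\partial)u$, so that $dd^{c}u=2\sqrt{-1}\,\bar\partial\partial u$. Using the elementary identity $tr_{\omega}\alpha\cdot\frac{\omega^{n}}{n!}=\alpha\wedge\frac{\omega^{n-1}}{(n-1)!}$, valid for every real $(1,1)$-form $\alpha$, together with the definition $\Delta_{\omega}^{Ch}u=2\sqrt{-1}\,tr_{\omega}\bar\partial\partial u$, one gets
\[
\Delta_{\omega}^{Ch}u\cdot\frac{\omega^{n}}{n!}=dd^{c}u\wedge\frac{\omega^{n-1}}{(n-1)!}.
\]
Since $d^{c}u$ is a $1$-form, the Leibniz rule and the defining relation $d\omega^{n-1}=\theta\wedge\omega^{n-1}$ give
\[
dd^{c}u\wedge\frac{\omega^{n-1}}{(n-1)!}=d\Bigl(d^{c}u\wedge\frac{\omega^{n-1}}{(n-1)!}\Bigr)+d^{c}u\wedge\theta\wedge\frac{\omega^{n-1}}{(n-1)!}.
\]

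It remains to recognise the two summands on the right. Here one uses that $d^{c}u=J(du)$ together with the pointwise identity $*\beta=-J\beta\wedge\frac{\omega^{n-1}}{(n-1)!}$ for a real $1$-form $\beta$, which combine to $d^{c}u\wedge\frac{\omega^{n-1}}{(n-1)!}=-*du$. Combining this with $d^{*}=-*d*$ on $1$-forms in real dimension $2n$, the first summand equals $-d*du=(d^{*}du)\frac{\omega^{n}}{n!}=\Delta_{\omega}u\cdot\frac{\omega^{n}}{n!}$, while the second equals $\theta\wedge *du=\omega(du,\theta)\frac{\omega^{n}}{n!}$ by the defining property of $*$. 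Cancelling the nowhere vanishing volume form yields $\Delta_{\omega}^{Ch}u=\Delta_{\omega}u+\omega(du,\theta)$. For the adjoint, write $Lu:=\omega(du,\theta)$, so that $\Delta_{\omega}^{Ch}=\Delta_{\omega}+L$; since $\Delta_{\omega}$ is self-adjoint it suffices to compute $L^{*}$. For $u,v\in C^{\infty}(M,\R)$ one has
\[
\int_{M}(Lu)\,v\,\frac{\omega^{n}}{n!}=\int_{M}\omega(du,v\theta)\,\frac{\omega^{n}}{n!}=\int_{M}u\,d^{*}(v\theta)\,\frac{\omega^{n}}{n!},
\]
and the Leibniz rule for the codifferential, $d^{*}(v\theta)=v\,d^{*}\theta-\omega(dv,\theta)$, gives $(\Delta_{\omega}^{Ch})^{*}v=\Delta_{\omega}v-\omega(dv,\theta)+v\,d^{*}\theta$.

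I expect the main obstacle to be purely bookkeeping: the precise signs in $d^{c}u=J(du)$, in $*\beta=-J\beta\wedge\frac{\omega^{n-1}}{(n-1)!}$, in $d^{*}=-*d*$, and those produced by commuting $\theta$ past $*du$ all depend on the chosen orientation and metric conventions and must be fixed consistently, but there is no conceptual difficulty once they are. An alternative that bypasses the Hodge star is a direct computation in local holomorphic coordinates: one expands the Laplace--Beltrami operator as $\Delta_{\omega}u=-2\omega^{i\bar j}\partial_{i}\partial_{\bar j}u-2\bigl[\partial_{i}\omega^{i\bar j}+\omega^{i\bar j}\partial_{i}\log\det(\omega_{k\bar l})\bigr]\partial_{\bar j}u+\text{c.c.}$, computes the torsion $1$-form of the Chern connection as $\theta_{i}=\partial_{i}\log\det(\omega_{k\bar l})-\omega^{k\bar l}\partial_{k}\omega_{i\bar l}$, and then checks the algebraic identity $\partial_{i}\omega^{i\bar j}+\omega^{i\bar j}\partial_{i}\log\det(\omega_{k\bar l})=\omega^{i\bar j}\theta_{i}$ (which follows on differentiating $\omega^{i\bar j}\omega_{k\bar j}=\delta^{i}_{k}$ and relabelling indices); this immediately reproduces the first formula, and the adjoint formula then follows exactly as above.
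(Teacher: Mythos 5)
Your argument is correct. Note that the paper does not prove this statement at all: it is imported verbatim from Gauduchon (\cite{Gauduchon1}), so there is no internal proof to compare against; what you have written is essentially the standard (indeed Gauduchon's) derivation. The key steps all check out with a consistent set of conventions: with $d^{c}u=\sqrt{-1}(\partial-\bar\partial)u$ one has $tr_{\omega}(dd^{c}u)=-2\omega^{i\bar j}\partial_i\partial_{\bar j}u=\Delta^{Ch}_{\omega}u$, the identity $*\beta=-J\beta\wedge\frac{\omega^{n-1}}{(n-1)!}$ holds for the convention $(J\beta)(X)=\beta(JX)$ (for which also $d^{c}u=J(du)$), the pointwise identification $d\bigl(d^{c}u\wedge\frac{\omega^{n-1}}{(n-1)!}\bigr)=-d*du=(d^{*}du)\frac{\omega^{n}}{n!}$ is valid since $d^{*}=-*d*$ in even real dimension, and the two sign changes in $d^{c}u\wedge\theta\wedge\frac{\omega^{n-1}}{(n-1)!}=\theta\wedge*du$ cancel as you indicate; the adjoint formula then follows from $\int_M\omega(du,v\theta)\frac{\omega^n}{n!}=\int_M u\,d^{*}(v\theta)\frac{\omega^n}{n!}$ and $d^{*}(v\theta)=v\,d^{*}\theta-\omega(dv,\theta)$ exactly as you say. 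The only caveat is in your alternative coordinate sketch: the sign of $\theta_i$ relative to the defining relation $d\omega^{n-1}=\theta\wedge\omega^{n-1}$ depends on which index of the Chern torsion is traced, so that formula needs the same convention-checking you already flag; it does not affect the main proof.
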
\begin{rmk}Let $(M^n,\omega)$ a Hermitian manifold with $n\ge2$, we can consider the Lefschetz operator\[L\colon \bigwedge\nolimits^{\bullet}M\to \bigwedge\nolimits^{\bullet+2}M,  \ \ \ L-=-\wedge\omega.\] We know that \[L^{n-1}\colon\bigwedge\nolimits^1M\to\bigwedge\nolimits^{2n-1}M\] is an isomorphism. This fact implies that the torsion 1-form is well defined.\end{rmk} The Hermitian metric $\omega$ is called Gauduchon if $d^*\theta=0$. Instead, one says that $\omega$ is balanced if $\theta=0.$ If $\dim_{\C}M=n$, note that the condition $d^*\theta=0$ is equivalent to $\partial\bar{\partial}\omega^{n-1}=0$. That is because $\theta=J*d\omega^{n-1}$, where $*$ is the Hodge operator associated to $\omega$. Whereas, the condition $\theta=0$ is equivalent to $d\omega^{n-1}=0.$\begin{rmk}\label{rmk:1.3}Theorem \ref{thm:1.1} implies that the index of the Chern Laplacian and the index of the Hodge Laplacian coincide. This guarantees that the kernels of both the Chern Laplacian and his formal adjoint are 1-dimensional. Moreover, $\ker(\Delta_{\omega}^{Ch})=\R$. In general, $\ker((\Delta_{\omega}^{Ch})^*)$ does not coincide with the set of constant functions. That is, essentially, due to the presence of the additional term $ud^*\theta$ in the expression of $(\Delta_{\omega}^{Ch})^*u$, see \cite[p.388]{Gauduchon1}. \end{rmk} Remark \ref{rmk:1.3} suggests the definition of a particular function, called eccentricity function associated to a Hermitian metric.\begin{defi}[\cite{Gauduchon1}, Définition 2]\label{def:1.4}Let $(M,\omega)$ be a compact Hermitian manifold. A function $f_0\in C^{\infty}(M,\R)$ is the \emph{eccentricity function} associated to $\omega$ if $f_0\in \ker((\Delta_{\omega}^{Ch})^*)$ and $\langle f_0,1\rangle_{L^2(M)}=Vol(M,\omega).$\end{defi} Gauduchon proved that $f_0>0$. Moreover, he proved that $f_0=1$ if and only if $\omega$ is Gauduchon, see \cite[Théorème 2]{Gauduchon1}. As stated in Remark \ref{rmk:1.3}, the fact that $\dim(\ker(\Delta^{Ch}_{\omega})^*)=1$ easily guarantees that the eccentricity function associated to a Hermitian metric is unique.

A simple definition that we want to remember is the definition of conformal class and conformally equivalent metrics.\begin{defi}\label{def:1.5}Let $(M^n,\omega)$ be a Hermitian manifold. The conformal class of $\omega$ is \[\{\omega\}=\left\{\exp\left(\frac{2u}{n}\right)\omega \ \middle |\  u\in C^{\infty}(M,\R)\right\}.\] If $\omega$ and $\omega_1$ are Hermitian metrics on $M$, we say that $\omega_1$ is conformal to $\omega$ if $\omega_1\in\{\omega\}$. We will say that $\omega_1$ is conformally equivalent to $\omega$ if there exists $\varphi\in Aut(M)$, the group of biholomorphisms of $M$, such that $\varphi^*\omega_1\in\{\omega\}$. \end{defi} A result that will be very useful for  our study of the prescribed  Chern scalar curvature problem is the  characterization of the variation of the Chern scalar curvature after a conformal change. \begin{prop}[\cite{Gauduchon2}, p. 502]\label{prop:1.6}Let $(M^n,\omega)$ be a Hermitian manifold and $u\in C^{\infty}(M,\R)$. We have\[S^{Ch}\left(\exp\left(\frac{2u}{n}\right)\omega\right)=\exp\left(-\frac{2u}{n}\right)(\Delta_{\omega}^{Ch}u+S^{Ch}(\omega)).\]\end{prop}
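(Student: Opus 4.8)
The statement is local and purely computational, so the plan is to verify it in a fixed system of local holomorphic coordinates $\{z_1,\dots,z_n\}$. The one ingredient that must be recalled is the potential representation of the first Chern--Ricci form: in such coordinates
\[
Ric^{(1)}(\omega)=-\sqrt{-1}\,\partial\bar\partial\log\det(\omega_{i\bar j}),
\]
equivalently $\omega^{k\bar l}R_{i\bar jk\bar l}=-\partial_i\partial_{\bar j}\log\det(\omega_{p\bar q})$. This is classical --- it reflects that $Ric^{(1)}(\omega)$ is the Chern curvature of the metric induced by $\omega$ on the anticanonical bundle $K_M^{-1}=\bigwedge^{n}T^{1,0}M$, whose local weight is $\det(\omega_{i\bar j})$ --- and it follows from the formula $R_{i\bar jk\bar l}=-\partial_i\partial_{\bar j}\omega_{k\bar l}+\omega^{p\bar q}\partial_i\omega_{k\bar q}\,\partial_{\bar j}\omega_{p\bar l}$ for the Chern curvature tensor together with the identities $\partial_i\log\det(\omega_{p\bar q})=\omega^{k\bar l}\partial_i\omega_{k\bar l}$ and $\partial_i\omega^{k\bar l}=-\omega^{k\bar q}\omega^{p\bar l}\partial_i\omega_{p\bar q}$.

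Granting this, I would write $\tilde\omega:=\exp\!\left(\frac{2u}{n}\right)\omega$, so that $\tilde\omega_{i\bar j}=\exp\!\left(\frac{2u}{n}\right)\omega_{i\bar j}$ and $\tilde\omega^{i\bar j}=\exp\!\left(-\frac{2u}{n}\right)\omega^{i\bar j}$. Since $(\omega_{i\bar j})$ is an $n\times n$ matrix, $\det(\tilde\omega_{i\bar j})=\exp(2u)\det(\omega_{i\bar j})$, hence $\log\det(\tilde\omega_{i\bar j})=2u+\log\det(\omega_{i\bar j})$, and applying $-\sqrt{-1}\,\partial\bar\partial$ gives
\[
Ric^{(1)}(\tilde\omega)=Ric^{(1)}(\omega)-2\sqrt{-1}\,\partial\bar\partial u .
\]
For any $(1,1)$-form $\alpha$ one has $tr_{\tilde\omega}\alpha=\exp\!\left(-\frac{2u}{n}\right)tr_{\omega}\alpha$, because $\tilde\omega^{i\bar j}=\exp(-2u/n)\omega^{i\bar j}$; taking the $\tilde\omega$-trace of the previous identity therefore yields
\[
S^{Ch}(\tilde\omega)=\exp\!\left(-\frac{2u}{n}\right)\bigl(S^{Ch}(\omega)-2\sqrt{-1}\,tr_{\omega}(\partial\bar\partial u)\bigr).
\]
Finally, $\partial\bar\partial u=-\bar\partial\partial u$ on functions, so $-2\sqrt{-1}\,tr_{\omega}(\partial\bar\partial u)=2\sqrt{-1}\,tr_{\omega}(\bar\partial\partial u)=\Delta_{\omega}^{Ch}u$ by the very definition of the Chern Laplacian, and the asserted formula follows.

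I do not anticipate any real obstacle here: all of the content sits in the first step, namely recalling the $\log\det$ expression for $Ric^{(1)}$ and keeping every sign and index convention (the sign in the definition of the Chern curvature tensor, the normalisations of $tr_{\omega}$ and of $\Delta_{\omega}^{Ch}$, and the identity $\partial\bar\partial=-\bar\partial\partial$ on functions) consistent with the ones fixed in Section~1; after that, the conformal change is a one-line manipulation. If one prefers to avoid invoking the potential for $Ric^{(1)}$, an equivalent route is to observe that under $\omega\mapsto\exp(2u/n)\omega$ the Chern connection coefficients $\Gamma_{ij}^{k}=\omega^{k\bar l}\partial_i\omega_{j\bar l}$ transform by $\tilde\Gamma_{ij}^{k}=\Gamma_{ij}^{k}+\frac{2}{n}(\partial_i u)\,\delta_j^{k}$, to compute from this the change of $R_{i\bar jk\bar l}$, and then to contract with $\omega^{i\bar j}\omega^{k\bar l}$; this is marginally longer but uses nothing beyond the definitions recalled above.
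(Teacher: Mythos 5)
Your proof is correct, and there is nothing in the paper to compare it against: Proposition \ref{prop:1.6} is stated as a quoted result, with the proof deferred to Gauduchon (\cite{Gauduchon2}, p.~502) and, in the almost-Hermitian setting, to \cite[Corollary 4.5]{LU}. Your argument is the standard direct verification: the potential representation $Ric^{(1)}(\omega)=-\sqrt{-1}\,\partial\bar\partial\log\det(\omega_{i\bar j})$ (which you correctly reduce to the curvature formula $R_{i\bar jk\bar l}=-\partial_i\partial_{\bar j}\omega_{k\bar l}+\omega^{p\bar q}\partial_i\omega_{k\bar q}\partial_{\bar j}\omega_{p\bar l}$ together with $\partial_i\log\det(\omega_{p\bar q})=\omega^{k\bar l}\partial_i\omega_{k\bar l}$), the scaling $\det(\tilde\omega_{i\bar j})=e^{2u}\det(\omega_{i\bar j})$ and $tr_{\tilde\omega}=e^{-2u/n}tr_\omega$, and the identification $-2\sqrt{-1}\,tr_\omega\partial\bar\partial u=-2\omega^{i\bar j}\partial_i\partial_{\bar j}u=\Delta^{Ch}_\omega u$, which matches the local expression of the Chern Laplacian fixed in Section~1. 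All signs and normalisations are consistent with the paper's conventions, and the alternative route you sketch via the transformation $\tilde\Gamma^k_{ij}=\Gamma^k_{ij}+\frac{2}{n}(\partial_iu)\delta^k_j$ of the Chern connection coefficients is also sound; so your write-up would serve as a self-contained proof of the cited formula.
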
See \cite[Corollary 4.5]{LU} in the almost-Hermitian setting.

 The last quantity that we want to recall is the so called Gauduchon degree of a conformal class. To define it, we need to remember an important Theorem due to Gauduchon. The proof of this result is based on the properties, that we already mentioned, of the eccentricity function associated to a Hermitian metric.\begin{thm}[\cite{Gauduchon1}, Théorème 1]\label{thm:1.7}Let $(M^n,\omega)$ be a compact Hermitian manifold. There exists a Gauduchon metric with volume 1 within $\{\omega\}$. If $n\ge2$, this metric is also unique.\end{thm}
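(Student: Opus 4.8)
The plan is to reduce the Gauduchon condition inside the conformal class $\{\omega\}$ to a single linear equation on a positive conformal factor, and then to read off existence and uniqueness from the one-dimensionality of $\ker\bigl((\Delta_\omega^{Ch})^*\bigr)$ and the positivity of the eccentricity function. Write a general element of $\{\omega\}$ as $\omega_u=\exp(\tfrac{2u}{n})\omega$; then $\omega_u^{\,n-1}=\varphi\,\omega^{n-1}$ with $\varphi:=\exp\bigl(\tfrac{2(n-1)}{n}u\bigr)>0$, and conversely, for $n\ge2$, every positive $\varphi\in C^{\infty}(M,\R)$ is of this form with $u=\tfrac{n}{2(n-1)}\log\varphi$. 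By the remarks following Theorem~\ref{thm:1.1}, $\omega_u$ is Gauduchon precisely when $\partial\bar{\partial}(\varphi\,\omega^{n-1})=0$, so the whole statement becomes a question about positive solutions $\varphi$ of this equation.

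The first step is to identify that equation with the kernel of $(\Delta_\omega^{Ch})^*$. From the local formula $\Delta_\omega^{Ch}u=-2\omega^{i\bar j}\partial_i\partial_{\bar j}u$ one gets the pointwise identity $\Delta_\omega^{Ch}u\;\tfrac{\omega^n}{n!}=-2\sqrt{-1}\,\partial\bar{\partial}u\wedge\tfrac{\omega^{n-1}}{(n-1)!}$; pairing with a function $\varphi$ and integrating by parts twice (the type-forbidden terms in the expansions of $d(\bar{\partial}u\wedge\varphi\,\omega^{n-1})$ and $d(u\,\partial(\varphi\,\omega^{n-1}))$ vanish by bidegree reasons) yields
\begin{align*}
\langle \Delta_\omega^{Ch}u,\varphi\rangle_{L^2(M)}
&=-\frac{2}{(n-1)!}\int_M \sqrt{-1}\,\partial\bar{\partial}u\wedge\varphi\,\omega^{n-1}\\
&=-\frac{2}{(n-1)!}\int_M u\,\sqrt{-1}\,\partial\bar{\partial}(\varphi\,\omega^{n-1})
\end{align*}
for every $u$. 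Comparing with $\langle \Delta_\omega^{Ch}u,\varphi\rangle_{L^2(M)}=\langle u,(\Delta_\omega^{Ch})^*\varphi\rangle_{L^2(M)}$ gives
\[
(\Delta_\omega^{Ch})^*\varphi\;\frac{\omega^n}{n!}=-\frac{2}{(n-1)!}\,\sqrt{-1}\,\partial\bar{\partial}(\varphi\,\omega^{n-1}),
\]
so a positive $\varphi$ produces a Gauduchon metric in $\{\omega\}$ if and only if $\varphi\in\ker\bigl((\Delta_\omega^{Ch})^*\bigr)$.

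Now I would invoke Remark~\ref{rmk:1.3}: $\ker\bigl((\Delta_\omega^{Ch})^*\bigr)$ is one-dimensional, and by Definition~\ref{def:1.4} together with Gauduchon's positivity statement it is spanned by the eccentricity function $f_0>0$. Hence its positive elements are exactly $\{cf_0:c>0\}$, and the Gauduchon metrics in $\{\omega\}$ form the one-parameter family $\omega_c:=(cf_0)^{1/(n-1)}\omega$, $c>0$ (using $n\ge2$ again). To finish, compute $\omega_c^{\,n}=(cf_0)^{n/(n-1)}\omega^n$, so
\[
Vol(M,\omega_c)=c^{\,n/(n-1)}\int_M f_0^{\,n/(n-1)}\,\frac{\omega^n}{n!},
\]
a continuous, strictly increasing bijection of $(0,\infty)$ onto $(0,\infty)$ since $n/(n-1)>0$; therefore there is exactly one $c$ with $Vol(M,\omega_c)=1$, giving both existence and, for $n\ge2$, uniqueness. (For $n=1$ the condition $\partial\bar{\partial}\omega^{n-1}=0$ is vacuous and a volume-$1$ representative is obviously non-unique, which matches why uniqueness is asserted only for $n\ge2$.)

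I expect the only genuinely computational point to be the integration-by-parts identity in the second paragraph — keeping track of signs and of which bidegree components drop out — whereas the rest is a matter of assembling Theorem~\ref{thm:1.1}, Remark~\ref{rmk:1.3}, and the properties of $f_0$ recalled above.
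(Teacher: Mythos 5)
Your argument is correct and follows exactly the route the paper points to: the paper states Theorem \ref{thm:1.7} as Gauduchon's Théorème 1 without reproving it, noting only that the proof rests on the properties of the eccentricity function, and your reduction of the Gauduchon condition in $\{\omega\}$ to positive elements of $\ker\bigl((\Delta_\omega^{Ch})^*\bigr)=\R f_0$ (via the identity $(\Delta_\omega^{Ch})^*\varphi\,\tfrac{\omega^n}{n!}=-\tfrac{2}{(n-1)!}\sqrt{-1}\,\partial\bar\partial(\varphi\,\omega^{n-1})$), followed by the volume normalization through the strictly increasing map $c\mapsto c^{n/(n-1)}\int_M f_0^{n/(n-1)}\tfrac{\omega^n}{n!}$, is precisely that argument. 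The integration by parts, the sign conventions, and the treatment of the $n=1$ versus $n\ge2$ distinction all check out.
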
 Then, if $\dim_{\C}M\ge2$, we can choose a particular metric in each conformal class of a Hermitian metric. This allows us to define the Gauduchon degree of a conformal class.\begin{defi}[\cite{Gauduchon1}, I.17]\label{def:1.8}Let $(M^n,\omega)$ be a compact Hermitian manifold with $n\ge2$. Choose $\eta\in\{\omega\}$ the only Gauduchon metric with volume 1. The Gauduchon degree of $\{\omega\}$ is\[\Gamma(\{\omega\})=\int_MS^{Ch}(\eta)\frac{\eta^n}{n!}.\] \end{defi} \begin{rmk}\label{rmk:1.9}Note that\[\Gamma(\{\omega\})=\frac{1}{(n-1)!}\int_Mc_1^{BC}(M)\wedge\eta^{n-1},\] where $c_1^{BC}(M)=c_1^{BC}(K_M^*)$ is the first Bott-Chern class of $M$. Thanks to this relation, we have that the Gauduchon degree of a conformal class is an invariant depending only on the complex structure of $M$ and on the conformal class $\{\omega\}$. The definition we gave is nothing but a particular case of the so called degree of a line bundle with respect to a conformal class, see \cite[I.17]{Gauduchon1}. \end{rmk} Before we start analysing the prescribed Chern scalar curvature problem, it will be useful to study the "constant" case, that is the Chern-Yamabe problem. This problem consists in proving the existence of a metric conformal to a fixed one with constant Chern scalar curvature. The Chern-Yamabe problem was firstly studied and, partially, resolved by Angella, Calamai and Spotti. The solution of the Chern-Yamabe problem can be reduced to solving  a non-linear elliptic PDE of 2nd order. Studying this equation, the authors were able to prove an important result.\begin{thm}[\cite{AnCaSp}, Theorem 3.1 and Theorem 4.1 ]\label{thm:1.10}
Let $M^n$ be a connected, compact complex manifold with $n\ge2$ endowed with a Hermitian metric $\omega$ with $\Gamma(\{\omega\})\le0$. Then, there exists a unique Hermitian metric conformal to $\omega$ with Chern scalar curvature that coincides with $\Gamma(\{\omega\})\le0.$ \end{thm}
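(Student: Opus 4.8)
The plan is to reduce the statement to a single semilinear elliptic equation and to treat the cases $\Gamma(\{\omega\})=0$ and $\Gamma(\{\omega\})<0$ separately. Since $\Gamma(\{\omega\})$ and $\{\omega\}$ depend only on the conformal class (Remark \ref{rmk:1.9}), by Theorem \ref{thm:1.7} I may assume from the outset that $\omega$ is the Gauduchon metric of volume $1$ in its conformal class, so that $d^*\theta=0$. By Theorem \ref{thm:1.1} this gives $(\Delta_\omega^{Ch})^*1=d^*\theta=0$, so the constants lie in $\ker(\Delta_\omega^{Ch})^*$; since that kernel is $1$-dimensional (Remark \ref{rmk:1.3}), we get $\ker(\Delta_\omega^{Ch})^*=\R$. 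As $\Delta_\omega^{Ch}$ is elliptic, the Fredholm alternative and elliptic regularity then yield: for $h\in C^\infty(M,\R)$ the equation $\Delta_\omega^{Ch}v=h$ admits a smooth solution if and only if $\int_M h\,\frac{\omega^n}{n!}=0$, and then $v$ is unique up to an additive constant; in particular $\int_M\Delta_\omega^{Ch}v\,\frac{\omega^n}{n!}=0$ for every $v$. By Definition \ref{def:1.8} we have $\int_M S^{Ch}(\omega)\,\frac{\omega^n}{n!}=\Gamma(\{\omega\})=:c\le0$, and by Proposition \ref{prop:1.6} the metric $\exp\left(\frac{2u}{n}\right)\omega$ has Chern scalar curvature identically equal to $c$ precisely when
\begin{equation}\label{eq:CYplan}
\Delta_\omega^{Ch}u+S^{Ch}(\omega)=c\exp\left(\frac{2u}{n}\right).
\end{equation}
If $c=0$ this is the linear equation $\Delta_\omega^{Ch}u=-S^{Ch}(\omega)$, whose right-hand side integrates to $-\Gamma(\{\omega\})=0$; hence a smooth solution $u$ exists, unique up to an additive constant, which after normalizing the volume singles out a conformal metric with $S^{Ch}\equiv0$.

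Assume now $c<0$, so that $t\mapsto c\exp\left(\frac{2t}{n}\right)$ is strictly decreasing; this is the favourable sign, and I would solve \eqref{eq:CYplan} by the method of sub- and super-solutions, the delicate point being the construction of an ordered pair. Let $w\in C^\infty(M,\R)$ solve $\Delta_\omega^{Ch}w=c-S^{Ch}(\omega)$, which is solvable since its right-hand side integrates to $c-\Gamma(\{\omega\})=0$; thus $\Delta_\omega^{Ch}w+S^{Ch}(\omega)=c$. Using this identity and $c<0$, a direct computation shows that $u_+:=w-\min_M w$ is a super-solution of \eqref{eq:CYplan} and $u_-:=w-\max_M w$ a sub-solution, while $u_-\le u_+$ is clear. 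Writing $F(x,t):=c\exp\left(\frac{2t}{n}\right)-S^{Ch}(\omega)(x)$ and fixing $K>0$ large enough that $t\mapsto F(x,t)+Kt$ is increasing on $[\min_M u_-,\max_M u_+]$, the operator $\Delta_\omega^{Ch}+K$ is invertible (its kernel vanishes by the maximum principle and its index is zero) and satisfies the maximum principle; the monotone iteration $\Delta_\omega^{Ch}u_{k+1}+Ku_{k+1}=F(\cdot,u_k)+Ku_k$ with $u_0=u_+$ is therefore well posed, produces a sequence monotonically trapped between $u_-$ and $u_+$, and converges in $C^\infty$ by elliptic estimates to a solution $u$ of \eqref{eq:CYplan}; then $\exp\left(\frac{2u}{n}\right)\omega$ is the desired metric.

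For uniqueness when $c<0$, if $u_1,u_2$ both solve \eqref{eq:CYplan} then $\Delta_\omega^{Ch}(u_1-u_2)=c\left(\exp\left(\frac{2u_1}{n}\right)-\exp\left(\frac{2u_2}{n}\right)\right)$, and at a maximum point of $u_1-u_2$ the left-hand side is $\ge0$, so $c<0$ forces $u_1\le u_2$ there, hence on all of $M$; by symmetry $u_2\le u_1$, so $u_1=u_2$ (integrating \eqref{eq:CYplan} also shows that any such solution metric automatically has volume $1$). I expect the real difficulty to be that $\Delta_\omega^{Ch}$ is not self-adjoint, which rules out the naive variational argument: the whole scheme hinges on first passing to the Gauduchon representative---this is what makes both the Fredholm alternative and the identity $\int_M S^{Ch}(\omega)\,\frac{\omega^n}{n!}=\Gamma(\{\omega\})$ available---and on the explicit ordered sub- and super-solutions above, whose verification, together with the a priori estimates ensuring $C^\infty$ convergence of the iteration, are the points that require genuine care.
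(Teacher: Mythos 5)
Your argument is correct, but it takes a different route from the one the paper relies on. The statement of Theorem \ref{thm:1.10} is quoted from Angella--Calamai--Spotti: their Theorem 3.1 settles the case $\Gamma(\{\omega\})=0$ exactly as you do (pass to the Gauduchon representative, observe $\ker((\Delta^{Ch}_\omega)^*)=\R$, and solve the linear Poisson-type equation $\Delta^{Ch}_\omega u=-S^{Ch}(\omega)$ by the Fredholm alternative), while their Theorem 4.1 treats $\Gamma(\{\omega\})<0$ by the continuity method --- the same scheme reproduced in the proof of Theorem \ref{thm:2.1} here: deform $t\Gamma(\{\omega\})+(1-t)g$, get openness from the invertibility of the linearization, closedness from $C^0$ bounds at extremum points plus Calder\'on--Zygmund/Schauder bootstrapping. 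You instead solve the negative case by monotone iteration between an ordered sub/supersolution pair, built from the auxiliary linear solution $\Delta^{Ch}_\omega w=c-S^{Ch}(\omega)$; I checked the verifications ($u_+=w-\min_M w\ge 0$ gives $c\exp(2u_+/n)\le c=\Delta^{Ch}_\omega u_++S^{Ch}(\omega)$, and symmetrically for $u_-$), and they are fine. This is essentially the machinery the paper develops only later (Theorem \ref{thm:2.3}, Corollary \ref{cor:2.4}, Theorem \ref{thm:2.5}) for nonconstant prescribed curvature, so your proof has the advantage of being self-contained, avoiding the one-parameter family and its a priori estimates, and of producing the solution trapped in an explicit interval; the continuity method of the paper/ACS has the advantage of generalizing directly to strictly negative nonconstant $g$ without first constructing a supersolution. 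Your uniqueness argument via the maximum principle for $c<0$ is the standard one and matches the paper's.

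Two small points. First, your parenthetical claim that integrating the equation shows the solution metric ``automatically has volume $1$'' is not right: integration (against the Gauduchon volume) gives $\int_M\exp\left(\frac{2u}{n}\right)\frac{\omega^n}{n!}=1$, whereas the volume of $\exp\left(\frac{2u}{n}\right)\omega$ is $\int_M\exp(2u)\frac{\omega^n}{n!}$; this is harmless since it is not used anywhere. Second, in the case $\Gamma(\{\omega\})=0$ the metric with vanishing Chern scalar curvature is unique only up to homothety, so the uniqueness assertion of Theorem \ref{thm:1.10} must be read with a volume normalization (as in ACS); your remark about normalizing the volume handles this correctly, and the imprecision lies in the statement rather than in your proof.
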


 So, if $(M^n,\omega)$ is a connected, compact  Hermitian manifold with $n\ge2$, at least in the case where $\Gamma(\{\omega\})\le0$, we can choose, within the conformal class, a metric with constant Chern scalar curvature. Unfortunately, in the case where $\Gamma(\{\omega\})>0,$ we do not know  if we can choose a metric conformal to the fixed one with constant Chern scalar curvature. Surely, the uniqueness does not  hold anymore. In \cite{AnCaSp}, the authors construct an example in which they prove that there exist at least two, non homothetic, metrics with constant Chern scalar curvature within the conformal class of a particular Hermitian metric.

\section{The prescribed Chern scalar curvature problem}
The prescribed Chern scalar curvature problem consists in proving the existence of a Hermitian metric on a connected, compact complex manifold of dimension at least $2$ such that its Chern scalar curvature coincides with a fixed smooth function. 

Our approach to the resolution of the problem is that of finding solutions within the conformal class of a fixed Hermitian metric. Therefore,  we fix a Hermitian metric $\omega$ and $g\in C^{\infty}(M,\R)$. Thanks to the Proposition \ref{prop:1.6}, a metric conformal to $\omega$ has Chern scalar curvature coinciding with $g$ if and only if  the equation \begin{equation}\label{eq:1}\Delta_{\omega}^{Ch}u+S^{Ch}(\omega)=g\exp\left(\frac{2u}{n}\right)
\end{equation} admits a solution. On the other hand, we can choose the unique Gauduchon metric $\eta\in\{\omega\}$ with volume 1 as reference metric and rewrite the equation (\ref{eq:1}) as a  function of $\eta.$ Integrating that equation over $M$, we obtain the condition\begin{equation}\label{eq:2}\Gamma(\{\omega\})=\int_Mg\exp\left(\frac{2u}{n}\right)\frac{\eta^n}{n!}.\end{equation} The condition we just found suggests a division of the problem into three cases, depending on the sign of  $\Gamma(\{\omega\}). $ We will analyse each of these cases separately. Note that, in this problem, the Gauduchon degree of the conformal class plays the same role as the first eigenvalue of the linear part of the operator defining the prescribed scalar curvature equation studied in \cite{KazWar2}.
\subsection{Case $\Gamma(\{\omega\})<0$.}

In this case, the condition (\ref{eq:2}) implies that the function $g$ must be negative somewhere on $M$. Furthermore, thanks to Theorem \ref{thm:1.10}, we can choose the unique Hermitian metric in $\{\omega\}$ with constant Chern scalar curvature equal to $\Gamma(\{\omega\})<0.$ In the following, we will indicate this metric with $\omega$ and with $f_0$ its eccentricity function. We can rewrite the equation (\ref{eq:1}) as follows:\begin{equation}\label{eq:3}\Delta_{\omega}^{Ch}u+\Gamma(\{\omega\})=g\exp\left(\frac{2u}{n}\right).\end{equation}Multiplying  the equation (\ref{eq:3}) by $\exp(-\frac{2u}{n})$, we can use the formula \begin{equation}\label{eq:4}\exp\left(-\frac{2u}{n}\right)\Delta^{Ch}_{\omega}u=-\frac{n}{2}\Delta^{Ch}_{\omega}\left(\exp\left(-\frac{2u}{n}\right)\right)-\frac{4}{n}\exp\left(-\frac{2u}{n}\right)\omega(du,du).\end{equation} to obtain a new equation. By multiplying this new one by $f_0 $ and integrating on $M$, we obtain a new necessary condition on $g$, that is\begin{equation}\label{eq:*}\int_Mgf_0\frac{\omega^n}{n!}<0.\tag{*}\end{equation}  We already noticed that $f_0>0$ on $M$, so all non-identically zero and non-positive smooth functions
satisfy this condition. Then, our first objective is to prove that such functions $g$ are Chern scalar curvature of a metric conformal to $\omega.$ Using the continuity method, we prove the following  generalization of \cite[Theorem 1.1]{Ho}, removing the hypothesis that $\omega$ is balanced.\begin{thm}\label{thm:2.1}Let $M^n$ be a connected, compact complex manifold with $n\ge2$ endowed with a Hermitian metric $\omega$ such that $\Gamma(\{\omega\})<0.$ If $g\in C^{\infty}(M,\R),$ $g<0$, then $g$ is the Chern scalar curvature of a unique metric conformal to $\omega$.\end{thm}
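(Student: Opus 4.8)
The plan is to solve equation~(\ref{eq:3}) by the continuity method. By Theorem~\ref{thm:1.10} we may assume, after a conformal change (which leaves $\{\omega\}$ unchanged), that $S^{Ch}(\omega)\equiv\gamma:=\Gamma(\{\omega\})<0$. For $t\in[0,1]$ set $g_t:=(1-t)\gamma+tg\in C^\infty(M,\R)$; being a convex combination of a negative constant and an everywhere negative function, $g_t<0$ on $M$ for every $t$. Consider the family of equations
\[
(E_t)\qquad\Delta_{\omega}^{Ch}u+\gamma=g_t\exp\!\left(\tfrac{2u}{n}\right),
\]
so that $u\equiv0$ solves $(E_0)$ and a solution of $(E_1)$ yields the desired metric $\exp(\tfrac{2u}{n})\omega$. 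Fix $\alpha\in(0,1)$ and put $T:=\{t\in[0,1]\ :\ (E_t)\text{ has a solution }u\in C^{2,\alpha}(M,\R)\}$; any such solution is smooth by elliptic regularity, so it suffices to show that $T$ is non-empty (it contains $0$), open and closed in $[0,1]$.

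\emph{Openness.} Define the smooth map $F\colon C^{2,\alpha}(M,\R)\times[0,1]\to C^{0,\alpha}(M,\R)$ by $F(u,t)=\Delta_{\omega}^{Ch}u+\gamma-g_t\exp(\tfrac{2u}{n})$. At a solution $(u_0,t_0)$ of $F=0$ the partial derivative in $u$ sends $v$ to
\[
Lv=\Delta_{\omega}^{Ch}v-\tfrac{2}{n}\,g_{t_0}\exp\!\left(\tfrac{2u_0}{n}\right)v ,
\]
an elliptic operator whose zeroth order coefficient $-\tfrac{2}{n}g_{t_0}\exp(\tfrac{2u_0}{n})$ is strictly positive. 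Hence $L$ is injective: if $Lv=0$, at a point where $v$ attains its maximum one has $\Delta_{\omega}^{Ch}v\ge0$ (the first order term of the Chern Laplacian vanishes there and the complex Hessian is $\le0$), which forces $v\le0$ at that point, so $\max_Mv\le0$; symmetrically $\min_Mv\ge0$, whence $v\equiv0$. Since $\Delta_{\omega}^{Ch}$, and therefore $L$, is a Fredholm operator of index zero (Remark~\ref{rmk:1.3}), $L$ is an isomorphism $C^{2,\alpha}\to C^{0,\alpha}$, and the implicit function Theorem shows that solutions persist for $t$ near $t_0$; thus $T$ is open.

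\emph{Closedness.} This is the main point, and it reduces to a uniform a priori estimate; here the sign hypotheses make the estimate immediate via the maximum principle, in contrast with the delicate bounds needed in the general Kazdan--Warner setting. Let $u$ solve $(E_t)$, and set $m:=\min_{[0,1]\times M}(-g_t)>0$ and $M_0:=\max_{[0,1]\times M}(-g_t)$. At a maximum point $x_0$ of $u$, $\Delta_{\omega}^{Ch}u(x_0)\ge0$ gives $g_t(x_0)\exp(\tfrac{2u(x_0)}{n})\ge\gamma$, hence, dividing by $g_t(x_0)<0$, $\exp(\tfrac{2u(x_0)}{n})\le\gamma/g_t(x_0)\le(-\gamma)/m$; at a minimum point $x_1$, $\Delta_{\omega}^{Ch}u(x_1)\le0$ gives $\exp(\tfrac{2u(x_1)}{n})\ge(-\gamma)/M_0$. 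Therefore $\|u\|_{C^0}\le C$ with $C$ independent of $t$. Inserting this bound, the right-hand side of $(E_t)$ is bounded in $C^0$, so $L^p$-estimates bound $u$ in $W^{2,p}$ for every $p$, Sobolev embedding bounds it in $C^{1,\beta}$, and iterating Schauder estimates on $(E_t)$ bounds $u$ in $C^{k,\alpha}$ for every $k$, all uniformly in $t$. If $t_j\to t$ with $u_j$ solving $(E_{t_j})$, the Arzel\`a--Ascoli theorem extracts a subsequence converging in $C^\infty$ to a solution of $(E_t)$; hence $T$ is closed, $T=[0,1]$, and $(E_1)$, i.e.\ equation~(\ref{eq:3}), is solvable.

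\emph{Uniqueness.} If $u_1,u_2$ solve~(\ref{eq:3}), then $w:=u_1-u_2$ satisfies $\Delta_{\omega}^{Ch}w=g\bigl(\exp(\tfrac{2u_1}{n})-\exp(\tfrac{2u_2}{n})\bigr)$. At a maximum point $x_0$ of $w$ the left-hand side is $\ge0$ while $g(x_0)<0$, so $\exp(\tfrac{2u_1(x_0)}{n})\le\exp(\tfrac{2u_2(x_0)}{n})$, i.e.\ $w(x_0)\le0$; hence $w\le0$ on $M$, and exchanging the roles of $u_1$ and $u_2$ gives $w\equiv0$. The only genuinely delicate part of the argument is setting up the a priori bound and the bootstrap cleanly; every other step is dictated by ellipticity and the maximum principle.
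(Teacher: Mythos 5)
Your proof is correct and follows essentially the same strategy as the paper: reduce to $S^{Ch}(\omega)=\Gamma(\{\omega\})<0$ via Theorem \ref{thm:1.10}, run the continuity method with openness from the invertibility of the linearization (maximum principle plus index zero) and closedness from $C^0$ a priori bounds at extrema followed by elliptic bootstrap and Arzel\`a--Ascoli, and uniqueness by the maximum principle. The only difference is cosmetic: you deform the coefficient of the exponential, $g_t=(1-t)\Gamma(\{\omega\})+tg$, whereas the paper keeps $g$ fixed and deforms the inhomogeneous term via $t\Gamma(\{\omega\})+(1-t)g$; both homotopies work because $g<0$ keeps the zeroth order term of the linearization strictly positive along the path.
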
 \begin{proof}The proof is essentially the same as in \cite[Theorem 4.1]{AnCaSp}. First of all, suppose that $\omega$ is such that $S^{Ch}(\omega)=\Gamma(\{\omega\})<0$. The uniqueness is a direct consequence of the maximum principle. We use the continuity method to obtain the existence. So, define, for any $ t\in [0,1]$, the equation\begin{equation}\label{eq:$3_t$}\Delta_{\omega}^{Ch}u+t\Gamma(\{\omega\})-g\exp\left(\frac{2u}{n}\right)+(1-t)g=0,\tag{$3_t$}\end{equation} and \[T=\{t\in[0,1] \ | \  \text{(\ref{eq:$3_t$}) has a solution in } \ C^{2,\alpha}(M)\}.\] Observe that $T\ne\emptyset$ because $u=0$ is a solution of $(3_0)$. 

Fix $t\in[0,1]$ and define $G\colon C^{2,\alpha}(M)\to C^{0,\alpha}(M) $ such that, for any $ \varphi\in C^{2,\alpha}(M)$, \[G(\varphi)=\Delta_{\omega}^{Ch}\varphi+tS^{Ch}(\omega)-g\exp\left(\frac{2\varphi}{n}\right)+(1-t)g.\] Suppose that $u\in C^{2,\alpha}(M)$ is a solution of (\ref{eq:$3_t$}). We have that \[d_uG=\Delta_{\omega}^{Ch}-\frac{2}{n}g\exp\left(\frac{2u}{n}\right)Id.\] Obviously, $d_uG$ is an elliptic differential operator. By the maximum principle, it follows that $d_uG$ is injective. On the other hand, the index of $d_uG$ coincides with the index of the Hodge Laplacian. Therefore, $(d_uG)^*$ must be injective and then $d_uG$ must be surjective too, so $d_uG$ is invertible. Applying the implicit function Theorem, we obtain that $T$ is open. 

If we can prove that $T$ is closed too, automatically, we obtain that $T=[0,1]$. In particular, this fact guarantees  that $(3_1)$ has a solution in $C^{2,\alpha}(M).$ But $(3_1)$ coincides with (\ref{eq:3}) and then we obtain the existence we are looking for.  We need to find some a priori estimates on solutions of (\ref{eq:$3_t$}) in order to  prove that $T$ is closed. Then, let $u\in C^{2,\alpha}(M)$ be a solution of (\ref{eq:$3_t$}) and let $p,q\in M$ be, respectively, the maximum and minimum point of $u$. We have  \[-g(p)\exp\left(\frac{2u(p)}{n}\right)=-(\Delta_{\omega}^{Ch}u)(p)-tS^{Ch}(\omega)(p)-(1-t)g(p)\]\[\le-tS^{Ch}(\omega)(p)-(1-t)g(p)\le- \min_MS^{Ch}(\omega)-\min_Mg.\]Then \[\exp\left(\frac{2u(p)}{n}\right)\le-\frac{1}{g(p)}\left(- \min_MS^{Ch}(\omega)-\min_Mg\right)=C(M,\omega,g).\]Similarly,  \[-g(q)\exp\left(\frac{2u(q)}{n}\right)\ge\min_{t\in[0,1]}\left(t\min_M\left(-S^{Ch}(\omega)\right)+(1-t)\min_M\left(-g\right)\right)=C'(M,\omega,g).\] These two inequalities imply that \[\lVert u\rVert_{L^{\infty}(M)}\le K,\] where $K=K(M,\omega,g)>0$. Thanks to this inequality, iterating the Calder\'on-Zygmund inequality and using the Sobolev embedding, we obtain  that  there exists a constant $K'=K'(M,\omega,g)>0$ such that \[\lVert u\rVert_{C^{3,\alpha}(M)}\le K'.\]Then, choose $\{t_k\}_{k\in\N}\subset T$ such that $t_k\to t_{\infty}$ as $k\to+\infty$. We take $\{u_k\}_{k\in\N}\subset C^{2,\alpha}(M)$ such that $u_k$ is a solution of $(3_{t_k})$, $\forall k\in \N$.  Thanks to the estimates above and to Ascoli-Arzelà Theorem, we have  that there exists $u\in C^{2,\alpha}(M)$ such that $u_k\to u$ in $C^{2,\alpha}(M)$. We see that $u$ is a solution of $(3_{t_{\infty}})$. Then $T$ is closed. Finally, if $u\in C^{2,\alpha}(M)$ is a solution of $(3_1)$, we can prove that $u\in C^{\infty}(M,\R)$ by using Schauder estimates. \end{proof} The a priori estimates in the proof of Theorem \ref{thm:2.1} do not hold if $g$ is zero somewhere in $M$. So, we have to find some alternative method to solve the equation (\ref{eq:3}) in this case.
 \begin{defi} Let $(M^n,\omega)$ be a connected, compact  Hermitian manifold with $n\ge2$. We say that $u_-,u_+\in W^{2,p}(M) $, $p>n$, are, respectively, a subsolution and a supersolution for the equation (\ref{eq:1}) if \[\Delta_{\omega}^{Ch}u_{-}\le g\exp\left(\frac{2u_{-}}{n}\right)-S^{Ch}(\omega) ,\ \ \ \Delta_{\omega}^{Ch}u_{+}\ge g\exp\left(\frac{2u_{+}}{n}\right)-S^{Ch}(\omega).\]\end{defi} We prove the following Theorem which states that the existence of solution of (\ref{eq:1}) is equivalent to the existence of both a subsolution and a supersolution.
\begin{thm}\label{thm:2.3}Let $(M^n,\omega)$ be a connected, compact  Hermitian manifold with $n\ge2$. Suppose that we have $g\in C^{\infty}(M,\R),$ $p>n$ and $u_-,u_+\in W^{2,p}(M)$, respectively, a subsolution and a supersolution of (\ref{eq:1}) such that $u_-\le u_+.$ Then, there exists $u\in C^{\infty}(M,\R)$ a solution of (\ref{eq:1}) such that $u_-\le u\le u_+.$\end{thm}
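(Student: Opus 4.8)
The plan is to run the classical monotone iteration (method of sub- and supersolutions) adapted to the Chern Laplacian, which is possible precisely because $\Delta_\omega^{Ch}$ is a second-order elliptic operator with no zeroth-order term, so a maximum principle is available after adding a large constant. First I would fix $p>n$ and choose a constant $K>0$ large enough that the function $t\mapsto -g(x)\exp(2t/n)+Kt$ is nondecreasing in $t$ on the relevant range $[\min u_-,\max u_+]$ for every $x\in M$; this is possible since $u_-,u_+\in W^{2,p}(M)\hookrightarrow C^0(M)$ are bounded and $g$ is continuous. Then I would define the modified operator $L:=\Delta_\omega^{Ch}+K\,\mathrm{Id}$, which by Theorem \ref{thm:1.1} has the same index as $\Delta_\omega+K\,\mathrm{Id}$, hence (having added a positive constant) is invertible on the Sobolev/Hölder scale, and its inverse is order-preserving by the strong maximum principle for $L$.

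Next I would set up the iteration: starting from $u_0:=u_+$, define $u_{k+1}$ as the unique $W^{2,p}$ solution of
\[
\Delta_\omega^{Ch}u_{k+1}+K u_{k+1}=K u_k-g\exp\!\left(\frac{2u_k}{n}\right)-S^{Ch}(\omega).
\]
Using the definitions of supersolution and subsolution and the monotonicity of the right-hand side in the argument, together with the order-preserving property of $L^{-1}$, I would prove by induction the chain of inequalities
\[
u_-\le \cdots\le u_{k+1}\le u_k\le\cdots\le u_+ .
\]
The base step uses that $u_+$ is a supersolution (so $Lu_1\le Lu_0$, hence $u_1\le u_0$) and that $u_-\le u_+$ combined with monotonicity of the nonlinearity (so $Lu_1\ge L u_-$ applied to the subsolution inequality, giving $u_1\ge u_-$); the inductive step is identical in form. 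The sequence $\{u_k\}$ is then monotone and bounded in $L^\infty$, so it converges pointwise to some measurable $u$ with $u_-\le u\le u_+$.

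Finally I would upgrade the convergence and regularity. Since the right-hand sides $K u_k-g\exp(2u_k/n)-S^{Ch}(\omega)$ are uniformly bounded in $L^\infty(M)\subset L^p(M)$, Calderón--Zygmund (elliptic $L^p$) estimates for $L$ give a uniform bound on $\|u_k\|_{W^{2,p}(M)}$; by Sobolev embedding ($p>n$) we get a uniform $C^{1,\beta}$ bound for some $\beta\in(0,1)$, and then bootstrapping through Schauder estimates (the right-hand side now lies in $C^{0,\beta}$, in fact eventually in $C^\infty$ once $u$ is known to be smooth) yields convergence in $C^{2,\beta}(M)$ and $u\in C^\infty(M,\R)$. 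Passing to the limit in the iteration equation shows $\Delta_\omega^{Ch}u+K u=Ku-g\exp(2u/n)-S^{Ch}(\omega)$, i.e. $u$ solves (\ref{eq:1}), and $u_-\le u\le u_+$ by construction. The main obstacle — and the only genuinely Hermitian (as opposed to textbook-Riemannian) point — is establishing that $L=\Delta_\omega^{Ch}+K\,\mathrm{Id}$ is invertible with an order-preserving inverse: invertibility follows from Remark \ref{rmk:1.3} (equality of indices with the Hodge Laplacian) plus injectivity via the maximum principle for $L$, and the order-preserving property is the strong maximum principle applied to $\Delta_\omega^{Ch}$, which is legitimate because in local coordinates $\Delta_\omega^{Ch}u=-2\omega^{i\bar j}\partial^2 u/\partial z_i\partial\overline{z_j}$ has no first-order or zeroth-order terms, so Hopf's lemma applies verbatim. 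Everything else is the standard monotone iteration plus elliptic bootstrap.
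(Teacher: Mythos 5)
Your strategy is the paper's own: the Kazdan--Warner monotone iteration started at the supersolution, using the shifted operator $L=\Delta_{\omega}^{Ch}+K\,\mathrm{Id}$, whose invertibility and order-preserving inverse come from the index argument plus the maximum principle, followed by a Calder\'on--Zygmund/Sobolev/Schauder bootstrap; structurally nothing is missing and the Hermitian-specific points are handled correctly.

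However, you carry a consistent sign slip in the nonlinearity which, taken literally, derails the argument. Equation (\ref{eq:1}) is $\Delta_{\omega}^{Ch}u=g\exp(2u/n)-S^{Ch}(\omega)$, so the iteration must read
\[
\Delta_{\omega}^{Ch}u_{k+1}+Ku_{k+1}=Ku_k+g\exp\left(\frac{2u_k}{n}\right)-S^{Ch}(\omega),
\]
with $K$ chosen so that $t\mapsto Kt+g(x)\exp\left(\frac{2t}{n}\right)$ is nondecreasing on $[\min_M u_-,\max_M u_+]$ (for instance $K\ge\frac{2}{n}\max_M\lvert g\rvert\exp\left(\frac{2}{n}\max_Mu_+\right)$ suffices). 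You instead put $-g\exp(2u_k/n)$ on the right-hand side and chose $K$ to make $t\mapsto Kt-g(x)\exp(2t/n)$ monotone. With that sign the base step fails: the supersolution inequality only gives $Lu_+\ge Ku_++g\exp(2u_+/n)-S^{Ch}(\omega)$, which need not dominate your $Lu_1=Ku_+-g\exp(2u_+/n)-S^{Ch}(\omega)$ wherever $g<0$ (and in the application of this theorem $g$ is necessarily negative somewhere, by (\ref{eq:*})); moreover, your limiting equation $\Delta_{\omega}^{Ch}u=-g\exp(2u/n)-S^{Ch}(\omega)$ is the prescribed-curvature equation for $-g$, not for $g$. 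The fix is mechanical --- flip the sign of the exponential term in the iteration and adjust the monotonicity requirement on $K$ accordingly --- and with it your proof coincides with the one in the paper.
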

\begin{proof}
The proof of this result is similar to the proof of  \cite[Lemma 2.6]{KazWar2}. We recall here the main ideas. We set $f(x,u)=g\exp(\frac{2u}{n})-S^{Ch}(\omega)$.
 We define $u_0=u_+$ and, $\forall k\in \N$, $k\ge1$, the function $u_k$  as the only solution of the equation\[\Delta_{\omega}^{Ch}u_k+Ku_k=Lu_{k}=f(x,u_{k-1})+Ku_{k-1}, \] where $K>0$ is an appropriate constant. The existence of the functions $u_k$ is guaranteed by the invertibility of the operator $L\colon W^{2,p}(M)\to L^p(M)$, which is a consequence of the standard theory of elliptic PDEs.
 Using the maximum principle, we prove that, $\forall k\in \N$, $u_-\le u_{k-1}\le u_k\le u_+$. On the other hand, using  the inequality 
\[\lVert v\rVert_{W^{2,p}(M)}\le C\lVert Lv\rVert_{L^p(M)},\,\,\, \forall v\in W^{2,p}(M),\] 
due to the invertibility of $L$, the Sobolev embeddings and the Ascoli-Arzelà theorem, we prove that $u_k\to u$ in $C^0(M)$.
 From this fact, we prove that $u_k\to u$ in $W^{2,p}(M)$, so \[Lu=f(x,u)+Ku.\]
This fact implies that $u$ is a solution of (\ref{eq:1}).
 Using the Calder\'on-Zygmund inequality, we obtain that $u\in C^{\infty}(M,\R).$\end{proof}
So, thanks to Theorem \ref{thm:2.3}, in order to find a solution of (\ref{eq:1}), we can concentrate on finding both a subsolution $u_-$ and a supersolution $u_+$ such that $u_-\le u_+.$ Nevertheless, in the case we are analysing in this subsection, it can be proved  that this last condition is equivalent to the existence of only a supersolution.
\begin{corollario}\label{cor:2.4}Let $(M^n,\omega)$ be a connected, compact  Hermitian manifold with $S^{Ch}(\omega)=\Gamma(\{\omega\})<0$ and $n\ge2$. Suppose that we have $g\in C^{\infty}(M,\R),$ $p>n$ and $u_+\in W^{2,p}(M)$ a supersolution of (\ref{eq:3}). Then, there exists  a subsolution $u_-\in W^{2,p}(M)$ such that $u_-\le u_+$. \end{corollario}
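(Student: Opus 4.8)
The plan is to produce the required subsolution explicitly, as a suitably negative constant. Since any $c\in\R$ is smooth, hence lies in $W^{2,p}(M)$, and since $\Delta_{\omega}^{Ch}c\equiv 0$, the function $u_-=-C$ is a subsolution of (\ref{eq:3}) precisely when
\[
g\exp\!\left(-\tfrac{2C}{n}\right)\ \ge\ \Gamma(\{\omega\})\qquad\text{on }M.
\]
As $C\to+\infty$ the left-hand side converges to $0$ uniformly on $M$, because $g$ is bounded, whereas $\Gamma(\{\omega\})<0$ is a fixed negative constant; hence the displayed inequality holds for every sufficiently large $C>0$. This is the one place where the hypothesis $\Gamma(\{\omega\})<0$ is used in an essential way: it lets the strictly positive constant $-\Gamma(\{\omega\})$ dominate the vanishing quantity $g\exp(-2C/n)$, regardless of where $g$ takes negative values.

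To guarantee in addition that $u_-\le u_+$, I would invoke the Sobolev embedding $W^{2,p}(M)\hookrightarrow C^0(M)$, valid since $p>n$ and $\dim_{\R}M=2n$; thus $u_+$ is continuous and $\lVert u_+\rVert_{L^\infty(M)}<\infty$, and any $C\ge\lVert u_+\rVert_{L^\infty(M)}$ yields $-C\le u_+$ on $M$. Both conditions imposed on $C$ force it to be large and positive, so there is no conflict: taking $C$ larger than $\lVert u_+\rVert_{L^\infty(M)}$ and larger than the threshold obtained in the previous step, the constant $u_-:=-C$ is a subsolution of (\ref{eq:3}) belonging to $W^{2,p}(M)$ and satisfying $u_-\le u_+$, which is exactly the assertion.

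I do not expect a genuine analytic obstacle here; the only point requiring attention is the compatibility of the two constraints on $C$ just described, and this is precisely where the sign of the Gauduchon degree enters. The substance of the corollary is thus that, when $\Gamma(\{\omega\})<0$, Theorem \ref{thm:2.3} can be invoked knowing only a supersolution, since a constant automatically supplies the matching subsolution. (The construction fails for $\Gamma(\{\omega\})\ge 0$, consistently with the stated hypothesis.)
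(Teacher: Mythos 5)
Your proposal is correct and follows essentially the same route as the paper: a sufficiently negative constant is a subsolution of (\ref{eq:3}) precisely because $\Gamma(\{\omega\})<0$ (the paper writes the explicit threshold $u_-\le\frac{n}{2}\log\bigl(\Gamma(\{\omega\})/\min_Mg\bigr)$ where you argue by letting $C\to+\infty$), and the comparison $u_-\le u_+$ is obtained in both cases from the Sobolev embedding $W^{2,p}(M)\hookrightarrow C^0(M)$ for $p>n$. No gaps; your handling even covers the case $g\ge0$ without the paper's appeal to condition (\ref{eq:*}).
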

\begin{proof} Thanks to the necessary condition (\ref{eq:*}), we can suppose that $g\in C^{\infty}(M,\R)$ is negative somewhere on $M$. We can choose $u_-\in\R$ satisfying the inequality\[u_-\le\frac{n}{2}\log\left(\frac{\Gamma(\{\omega\})}{\min_Mg}\right)\] and obtain a subsolution of (\ref{eq:3}). But, if $u_+$ is a supersolution of (\ref{eq:3}), we know that $u_+\in W^{2,p}(M)$.  Using the Sobolev embeddings, $u_+$ is continuous and then  bounded on $M$. So, we can choose appropriately the constant $u_-$ so that the inequality $u_-\le u_+$ is satisfied. \end{proof}
 Corollary \ref{cor:2.4}  allows us to prove the following  generalization of Theorem \ref{thm:2.1}.
 \begin{thm}\label{thm:2.5}Let $M^n$ be a connected, compact complex manifold with $n\ge2$ endowed with a Hermitian metric $\omega$ such that $\Gamma(\{\omega\})<0.$ If $g\in C^{\infty}(M,\R)\setminus\{0\}$ is such that $g\le0$, then $g$ is the Chern scalar curvature of a unique metric conformal to $\omega$.\end{thm}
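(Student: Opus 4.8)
The plan is to reduce, as for Theorem~\ref{thm:2.1}, to solving the conformal PDE~(\ref{eq:3}), but — since the a priori estimates used there break down precisely when $g$ vanishes — to run the sub/supersolution method of Theorem~\ref{thm:2.3} and Corollary~\ref{cor:2.4} instead of the continuity method. By Theorem~\ref{thm:1.10} we may assume that $\omega$ is the (unique) metric in $\{\omega\}$ with $S^{Ch}(\omega)=\Gamma(\{\omega\})<0$, and we write $f_0>0$ for its eccentricity function; by Proposition~\ref{prop:1.6} it then suffices to find a smooth solution $u$ of~(\ref{eq:3}), for $\exp(\tfrac{2u}{n})\omega\in\{\omega\}$ will have Chern scalar curvature $g$. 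Now Corollary~\ref{cor:2.4} manufactures a subsolution $u_-\le u_+$ from any supersolution $u_+\in W^{2,p}(M)$ of~(\ref{eq:3}), and Theorem~\ref{thm:2.3} upgrades such a pair to a smooth solution. So the whole problem reduces to producing a single supersolution of~(\ref{eq:3}), and this is the step I expect to be the main obstacle.

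First I would record why the obvious candidates fail. A constant $u_+$ would force $g\exp(\tfrac{2u_+}{n})\le\Gamma(\{\omega\})<0$ everywhere, which is impossible wherever $g=0$; and, more intrinsically, no $u_+\in C^\infty(M,\R)$ can satisfy $\Delta_\omega^{Ch}u_++\Gamma(\{\omega\})\ge 0$ on all of $M$, since pairing with $f_0\in\ker((\Delta_\omega^{Ch})^*)$ gives $\int_M\Delta_\omega^{Ch}u_+\,f_0\,\tfrac{\omega^n}{n!}=0$, whence $\Gamma(\{\omega\})\,Vol(M,\omega)\ge 0$, a contradiction. Hence the linear part of the supersolution inequality must be allowed to become negative, and it can only be allowed to become negative inside $\{g<0\}$, where the free term $g\exp(\tfrac{2u_+}{n})$ can be pushed below it by a constant shift. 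Accordingly, since $g\not\equiv0$ and $g\le 0$, the set $\{g<0\}$ is a nonempty open set; fix $\rho\in C^\infty(M,\R)$ with $\rho\ge 0$, $\rho\not\equiv 0$ and $\operatorname{supp}\rho\subset\{g<0\}$, put
\[
c=\frac{-\Gamma(\{\omega\})\,Vol(M,\omega)}{\int_M\rho f_0\,\tfrac{\omega^n}{n!}}>0,
\]
so that $-\Gamma(\{\omega\})-c\rho$ is $L^2$-orthogonal to $\ker((\Delta_\omega^{Ch})^*)=\R f_0$, and solve the linear elliptic equation $\Delta_\omega^{Ch}\phi=-\Gamma(\{\omega\})-c\rho$ for $\phi\in C^\infty(M,\R)$ (possible by Remark~\ref{rmk:1.3} and elliptic regularity).

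Next I would set $u_+:=\phi+C$ for a large constant $C$. Then $\Delta_\omega^{Ch}u_++\Gamma(\{\omega\})=-c\rho\le 0$, and the supersolution inequality $\Delta_\omega^{Ch}u_++\Gamma(\{\omega\})\ge g\exp(\tfrac{2u_+}{n})$ holds: where $\rho=0$ it reads $0\ge g\exp(\tfrac{2u_+}{n})$, true since $g\le 0$; where $\rho>0$ it reads $\exp(\tfrac{2C}{n})\ge\dfrac{c\rho}{|g|\exp(\tfrac{2\phi}{n})}$, and the right-hand side is a bounded continuous function on the compact set $\operatorname{supp}\rho\subset\{g<0\}$ (where $|g|$ is bounded below), so it suffices to take $C$ with $\exp(\tfrac{2C}{n})\ge\sup_{\operatorname{supp}\rho}\tfrac{c\rho}{|g|\exp(2\phi/n)}$. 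Thus $u_+$ is a smooth (hence $W^{2,p}$) supersolution of~(\ref{eq:3}); Corollary~\ref{cor:2.4} and then Theorem~\ref{thm:2.3} yield a smooth solution $u$ of~(\ref{eq:3}), giving the metric $\exp(\tfrac{2u}{n})\omega$ with Chern scalar curvature $g$.

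Finally, for uniqueness I would argue by the maximum principle. If $u_1,u_2$ solve~(\ref{eq:3}), set $v=u_1-u_2$; subtracting gives $\Delta_\omega^{Ch}v=g\big(\exp(\tfrac{2u_1}{n})-\exp(\tfrac{2u_2}{n})\big)$. If $\max_M v>0$, then on the nonempty open set $\{v>0\}$ one has $\exp(\tfrac{2u_1}{n})-\exp(\tfrac{2u_2}{n})>0$, so $\Delta_\omega^{Ch}v\le 0$ there; since $v$ attains an interior maximum, the strong maximum principle for the elliptic operator $\Delta_\omega^{Ch}$ (which has no zeroth-order term) and connectedness of $M$ force $v$ to be the positive constant $\max_M v$ on all of $M$, whence $\Delta_\omega^{Ch}v\equiv 0$ and $g\equiv 0$ — contradicting $g\not\equiv 0$. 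So $\max_M v\le 0$, and symmetrically $\min_M v\ge 0$, so $u_1=u_2$. The one genuinely delicate point is the supersolution construction: the right idea is to solve a \emph{linear} Poisson-type equation whose source is a negative constant corrected by a bump supported in $\{g<0\}$, the bump's weight being fixed by the Fredholm condition against $f_0$, and then to dominate the resulting negative term on the support of the bump by a sufficiently large constant shift.
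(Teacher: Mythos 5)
Your argument is correct and follows essentially the same route as the paper: reduce, via Corollary \ref{cor:2.4} and Theorem \ref{thm:2.3}, to exhibiting a single supersolution of (\ref{eq:3}), construct it by solving a linear elliptic equation whose right-hand side is made $L^2$-orthogonal to the eccentricity function $f_0$, add a large constant, and conclude uniqueness by the maximum principle. The only difference is cosmetic: the paper solves $\Delta_{\omega}^{Ch}\phi=g-\frac{1}{Vol(M,\omega)}\int_Mgf_0\frac{\omega^n}{n!}$ and takes $u_+=k_1\phi+k_2$ with $k_1,k_2$ suitably large, whereas you solve $\Delta_{\omega}^{Ch}\phi=-\Gamma(\{\omega\})-c\rho$ with a bump $\rho$ supported in $\{g<0\}$ and then shift by a constant; both constructions exploit $g\le0$ in the same way and are interchangeable.
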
 \begin{proof}Consider  $\phi\in C^{\infty}(M,\mathbb{R})$ such that\[\Delta_{\omega}^{Ch}\phi=g-\frac{1}{Vol(M,\omega)}\int_M gf_0\frac{\omega^n}{n!}.\]Let $k_1,k_2\in \R$ be two constants that we will choose later and define $u_+=k_1\phi+k_2$. We have that \[\begin{split}\Delta_{\omega}^{Ch}u_++\Gamma(\{\omega\})-g\exp\left(\frac{2u_+}{n}\right)=k_1g &-\frac{k_1}{Vol(M,\omega)}\int_Mgf_0\frac{\omega^n}{n!}\\ &+\Gamma(\{\omega\}) -g\exp\left(\frac{2(k_1\phi+k_2)}{n}\right).	\end{split}\] It is sufficient to choose $k_1,k_2 $ such that \[\begin{cases}k_1\ge\frac{Vol(M,\omega)\Gamma(\{\omega\})}{\int_Mgf_0\frac{\omega^n}{n!}}>0 \\ k_2\ge \frac{n}{2}\log(k_1)-k_1\phi\end{cases}\] to obtain a supersolution of (\ref{eq:3}). The uniqueness of the metric we found is due to the maximum principle.\end{proof} So, it remains only to  understand when a smooth function  which changes sign on $M$ and satisfies the necessary condition (\ref{eq:*}) can be the Chern scalar curvature of a metric conformal to $\omega$. \begin{prop}\label{prop:2.6}Let $M^n$ be a connected, compact complex manifold with $n\ge2$ endowed with a Hermitian metric $\omega$ such that $S^{Ch}(\omega)=\Gamma(\{\omega\})<0.$ Suppose that $g\in C^{\infty}(M,\R) $ such that $\int_Mgf_0\frac{\omega^n}{n!}<0$.\begin{enumerate}
\renewcommand{\labelenumi}{\alph{enumi})}
\item If $g$ is the Chern scalar curvature of a metric conformal to $\omega$, then the unique solution of the equation \begin{equation}\label{eq:5}\Delta_{\omega}^{Ch}\psi-\frac{2}{n}\Gamma(\{\omega\})\psi=-\frac{2}{n}g\tag{$5$}\end{equation} must be positive.\item Let  $g_1\in C^{\infty}(M,\R)$  be such that $g_1\le g$ and  $\lambda>0.$ If $g$ is the Chern scalar curvature of a metric conformal to $\omega$, then both $g_1$ and $\lambda g$ will be.\item
   There exists a constant $c(g)\in[-\infty,0)$ such that the equation \begin{equation}\label{eq:$6_c$}\Delta_{\omega}^{Ch}u+c-g\exp\left(\frac{2u}{n}\right)=0\tag{$6_c$}\end{equation} admits a solution, $\forall c\in(c(g),0).$\end{enumerate}\end{prop}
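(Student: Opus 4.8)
The plan is to prove the three items separately: (a) and (b) are short deductions from the machinery already at hand, and (c) is the substantial one. \emph{Part (a).} By hypothesis $g=S^{Ch}(\exp(\frac{2u}{n})\omega)$ for some $u\in C^{\infty}(M,\R)$, so Proposition \ref{prop:1.6} says $u$ solves (\ref{eq:3}). I would multiply (\ref{eq:3}) by $w:=\exp(-\frac{2u}{n})>0$ and insert the identity (\ref{eq:4}) to get
\[
-\frac{n}{2}\Delta_{\omega}^{Ch}w-\frac{4}{n}\,w\,\omega(du,du)+\Gamma(\{\omega\})\,w=g ;
\]
since $\omega(du,du)\ge 0$ and $w>0$, this gives $\Delta_{\omega}^{Ch}w-\frac{2}{n}\Gamma(\{\omega\})\,w\le-\frac{2}{n}g$. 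Comparing with (\ref{eq:5}), the function $h:=\psi-w$ satisfies $\Delta_{\omega}^{Ch}h-\frac{2}{n}\Gamma(\{\omega\})\,h\ge 0$. Evaluating this at a minimum point of $h$, where $\Delta_{\omega}^{Ch}h\le 0$ by the sign convention of the Chern Laplacian, and using $\Gamma(\{\omega\})<0$, the maximum principle forces $h\ge 0$, i.e.\ $\psi\ge w>0$. (I also use that (\ref{eq:5}) is uniquely solvable: $-\frac{2}{n}\Gamma(\{\omega\})>0$, so $\Delta_{\omega}^{Ch}-\frac{2}{n}\Gamma(\{\omega\})\,\mathrm{Id}$ is injective by the maximum principle and, having the same index as the Hodge Laplacian, invertible.)

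\emph{Part (b).} For $\lambda g$, set $v:=u-\frac{n}{2}\log\lambda$ with $u$ solving (\ref{eq:3}) for $g$; since $\Delta_{\omega}^{Ch}v=\Delta_{\omega}^{Ch}u$ and $\lambda\exp(\frac{2v}{n})=\exp(\frac{2u}{n})$, one has $\Delta_{\omega}^{Ch}v+\Gamma(\{\omega\})=\lambda g\exp(\frac{2v}{n})$, i.e.\ $\lambda g=S^{Ch}(\exp(\frac{2v}{n})\omega)$ — equivalently $\exp(\frac{2v}{n})\omega=\lambda^{-1}\exp(\frac{2u}{n})\omega$, and rescaling a metric by $\lambda^{-1}$ rescales its Chern scalar curvature by $\lambda$. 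For $g_1\le g$, the solution $u$ of (\ref{eq:3}) for $g$ satisfies $\Delta_{\omega}^{Ch}u+\Gamma(\{\omega\})=g\exp(\frac{2u}{n})\ge g_1\exp(\frac{2u}{n})$, so $u$ is a supersolution of (\ref{eq:1}) with datum $g_1$; moreover $\int_M g_1 f_0\frac{\omega^n}{n!}\le\int_M g f_0\frac{\omega^n}{n!}<0$, so $g_1$ satisfies (\ref{eq:*}). Then Corollary \ref{cor:2.4} yields a subsolution $u_-\le u$ and Theorem \ref{thm:2.3} a smooth solution, hence $g_1$ is the Chern scalar curvature of a metric conformal to $\omega$.

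\emph{Part (c).} Let $T:=\{c\in(-\infty,0):\ (6_c)\ \text{is solvable}\}$ and $c(g):=\inf T$; I would establish (ii) the monotonicity ``$c_0\in T,\ c_0<c<0\ \Rightarrow\ c\in T$'' and (i) $T\ne\emptyset$, which together give $(c(g),0)\subseteq T$ and $c(g)\in[-\infty,0)$. For (ii): if $u_0$ solves $(6_{c_0})$, then $\Delta_{\omega}^{Ch}u_0+c=g\exp(\frac{2u_0}{n})+(c-c_0)\ge g\exp(\frac{2u_0}{n})$, so $u_0$ is a supersolution of $(6_c)$, while a constant $k$ chosen negative enough that $k\le\frac{n}{2}\log(c/\min_M g)$ (legitimate since $c<0$ and $\min_M g<0$, as $\int_M g f_0\frac{\omega^n}{n!}<0$ forces $g<0$ somewhere) and $k\le u_0$ is a subsolution; since the proof of Theorem \ref{thm:2.3} goes through verbatim with the constant $c$ in place of $S^{Ch}(\omega)$, $(6_c)$ has a smooth solution. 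For (i) I would bifurcate off $\varepsilon=0$: the substitution $u=v+\frac{n}{2}\log\varepsilon$ ($\varepsilon>0$) rewrites $(6_c)$ as $\Delta_{\omega}^{Ch}v+c=\varepsilon g\exp(\frac{2v}{n})$, and $F(v,c,\varepsilon):=\Delta_{\omega}^{Ch}v+c-\varepsilon g\exp(\frac{2v}{n})$ is a smooth map $V_0\times\R\times\R\to C^{0,\alpha}(M)$, with $V_0:=\{v\in C^{2,\alpha}(M):\int_M v\frac{\omega^n}{n!}=0\}$, satisfying $F(0,0,0)=0$. Its partial differential $D_{(v,c)}F(0,0,0)(\phi,s)=\Delta_{\omega}^{Ch}\phi+s$ is an isomorphism $V_0\times\R\to C^{0,\alpha}(M)$, because $\Delta_{\omega}^{Ch}|_{V_0}$ is injective with image $\ker((\Delta_{\omega}^{Ch})^{*})^{\perp}$ of codimension one, while $1\notin\ker((\Delta_{\omega}^{Ch})^{*})^{\perp}$ since $\langle 1,f_0\rangle_{L^2(M)}=Vol(M,\omega)\ne 0$. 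The implicit function theorem then gives $(v(\varepsilon),c(\varepsilon))\to(0,0)$ solving $F=0$ for small $\varepsilon>0$, and pairing that equation with $f_0$ yields $c(\varepsilon)\,Vol(M,\omega)=\varepsilon\int_M g\exp(\frac{2v(\varepsilon)}{n})f_0\frac{\omega^n}{n!}$, which is negative for $\varepsilon$ small since it converges to $\varepsilon\int_M g f_0\frac{\omega^n}{n!}<0$; hence $c(\varepsilon)\in T$, and bootstrapping makes the corresponding solution smooth.

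The main obstacle is precisely step (i) of part (c): parts (a), (b) and the monotonicity (ii) reduce to the sub/supersolution technique (Theorem \ref{thm:2.3}, Corollary \ref{cor:2.4}) and the maximum principle, whereas the nonemptiness of $T$ needs the bifurcation argument above, whose delicate points are choosing the complement $V_0$ that removes the additive constant making $\Delta_{\omega}^{Ch}$ noninvertible, and exploiting the eccentricity function $f_0$ both to see that the linearization is onto and to guarantee that the produced parameters $c(\varepsilon)$ are negative.
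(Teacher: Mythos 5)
Your proof is correct, and parts a) and b) coincide with the paper's own arguments (conformal substitution $v=\exp(-\frac{2u}{n})$ plus the maximum principle for a); a solution for $g$ serving as a supersolution for $g_1\le g$ together with Corollary \ref{cor:2.4} and Theorem \ref{thm:2.3}, and constant rescaling for $\lambda g$, for b)). Likewise, your monotonicity step in c) (a solution of $(6_{c_0})$ is a supersolution of $(6_c)$ for $c_0<c<0$, with a very negative constant as subsolution) is exactly the mechanism the paper uses. Where you genuinely diverge is the nonemptiness of the set of admissible $c$: the paper translates the existence of a supersolution of $(6_c)$ into the existence of a positive solution of the inequality (\ref{eq:7}), solves the linear equation $\Delta_{\omega}^{Ch}\phi=\frac{2}{n}\bigl(-g+\frac{1}{Vol(M,\omega)}\int_Mgf_0\frac{\omega^n}{n!}\bigr)$ (solvable since the right-hand side is $L^2$-orthogonal to $f_0$), and verifies that $\phi+a$ works for every $c$ above the explicit negative threshold, yielding the quantitative bound (\ref{eq:8}) on $c(g)$. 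You instead bifurcate from $\varepsilon=0$ via the implicit function theorem, after the substitution $u=v+\frac{n}{2}\log\varepsilon$; your verification that $(\phi,s)\mapsto\Delta_{\omega}^{Ch}\phi+s$ is an isomorphism from zero-average functions times $\R$ (using $\langle 1,f_0\rangle_{L^2(M)}=Vol(M,\omega)\neq0$) and your sign computation $c(\varepsilon)\,Vol(M,\omega)=\varepsilon\int_Mg\exp(\frac{2v(\varepsilon)}{n})f_0\frac{\omega^n}{n!}$ are both sound, though the phrasing should be that $c(\varepsilon)/\varepsilon\to\frac{1}{Vol(M,\omega)}\int_Mgf_0\frac{\omega^n}{n!}<0$, so $c(\varepsilon)<0$ for small $\varepsilon>0$, not that $c(\varepsilon)$ converges to a negative quantity. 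The trade-off: your soft argument is cleaner for pure existence and avoids manipulating (\ref{eq:7}), but it only gives $c(g)<0$ with no computable bound, whereas the paper's explicit supersolution produces the estimate (\ref{eq:8}), which is precisely the ingredient used afterwards to prove Proposition \ref{prop:2.9}; so if you proceeded with your version you would need to supplement it with the paper's construction (or an equivalent estimate) later anyway.
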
\begin{proof}\begin{enumerate} 
\renewcommand{\labelenumi}{\alph{enumi})}
\item Suppose that $u\in C^{\infty}(M,\R)$ is a solution of (\ref{eq:3}). Set $v=\exp(-\frac{2u}{n})$. Easily, we see that $v$ satisfies the equation\[\Delta_{\omega}^{Ch}v-\frac{2}{n}\left(-g+\Gamma(\{\omega\})v-n\frac{\omega(dv,dv)}{v}\right)=0.\]  If $\psi \in C^{\infty}(M,\R)$ is the unique solution of (\ref{eq:5}), we see that \[\Delta_{\omega}^{Ch}(\psi-v)-\frac{2}{n}\Gamma(\{\omega\})(\psi-v)=2\frac{\omega(dv,dv)}{v} >0.\] By the maximum principle, we have $\psi\ge v>0$.\item Note that, if $u_+ $ is a supersolution of (\ref{eq:3})  for $g$, it will be a supersolution of (\ref{eq:3}) for $g_1$ too. Moreover, we have $S^{Ch}(\lambda\omega)=\lambda^{-1}S^{Ch}(\omega)$, $\forall \lambda>0$.  From this, the assertion follows.\item 
Using the same arguments used to prove Theorem \ref{thm:2.3}, it follows that the equation (\ref{eq:$6_c$}) admits a solution if and only if it admits a supersolution. As observed above, if $c>\tilde{c}$, a supersolution of (\ref{eq:$6_c$}) is a supersolution of $(6_{\tilde{c}})$ too. The existence of a supersolution of (\ref{eq:$6_c$}) is equivalent to the existence of a positive solution of the following inequality:\begin{equation}\label{eq:7}\Delta_{\omega}^{Ch}v-\frac{2}{n}\left(-g+cv-n\frac{\omega(dv,dv)}{v}\right)\le 0.\tag{7}\end{equation} Choose $\phi\in C^{\infty}(M,\R)$ such that\[\Delta_{\omega}^{Ch}\phi=\frac{2}{n}\left(-g+\frac{1}{Vol(M,\omega)}\int_Mgf_0\frac{\omega^n}{n!}\right).\] Considering $a\in\R$ such that\[ a\ge\max_M\left\{-\frac{2nVol(M,\omega)}{\int_Mgf_0\frac{\omega^n}{n!}}\omega(d\phi,d\phi)-\phi\right\}\] and $c\ge \frac{\int_Mgf_0\frac{\omega^n}{n!}}{2Vol(M,\omega)\min_M\{\phi+a\}}$, we verify that $\phi+a$ satisfies (\ref{eq:7}). This fact implies that \begin{equation}\label{eq:8}c(g)\le\frac{\int_Mgf_0\frac{\omega^n}{n!}}{2Vol(M,\omega)\min_M\{\phi+a
\}}<0.\tag{8}\end{equation}\end{enumerate}\end{proof}
\begin{rmk}In particular, the first part of the above Proposition  guarantees a stronger necessary condition than (\ref{eq:*}). Indeed, if we multiply (\ref{eq:5}) by $f_0$ and then integrate on $M$, we obtain that\[\int_Mgf_0\frac{\omega^n}{n!}<0.\]Using the same arguments used to prove Theorem \ref{thm:2.5}, observe that $c(g)=-\infty$ for all $g\in C^{\infty}(M,\R)\setminus\{0\}$ such that $g\le0$. However, we do not know if these functions are the only ones that satisfy this property. \end{rmk}
Thanks to part a) of Proposition \ref{prop:2.6}, we can prove the existence of smooth functions that satisfy the condition (\ref{eq:*}) which can not be Chern scalar curvature of a metric conformal to $\omega.$\begin{corollario}\label{cor:2.8}Let $M^n$ be a connected, compact complex manifold with $n\ge2$ endowed with a Hermitian metric $\omega$ such that $S^{Ch}(\omega)=\Gamma(\{\omega\})<0.$ Then, there exists $g\in C^{\infty}(M,\R)$ with $\int_Mgf_0\frac{\omega^n}{n!}<0$ such that it cannot be the Chern scalar curvature of any metric conformal to $\omega$.\end{corollario}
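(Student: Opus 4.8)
The plan is to run part a) of Proposition \ref{prop:2.6} "backwards". Recall that if $g$ is the Chern scalar curvature of a metric conformal to $\omega$, then the \emph{unique} solution $\psi$ of \eqref{eq:5}, i.e. of $\Delta_{\omega}^{Ch}\psi-\frac{2}{n}\Gamma(\{\omega\})\psi=-\frac{2}{n}g$, must be strictly positive on $M$. So it is enough to exhibit a single $g\in C^{\infty}(M,\R)$ with $\int_Mgf_0\frac{\omega^n}{n!}<0$ for which this $\psi$ is \emph{not} positive. Since the operator $L_0u:=\Delta_{\omega}^{Ch}u-\frac{2}{n}\Gamma(\{\omega\})u$ is elliptic and, because $\Gamma(\{\omega\})<0$, injective on $C^{\infty}(M,\R)$ by the maximum principle (this is precisely the uniqueness already used in Proposition \ref{prop:2.6}), prescribing $g$ and solving for $\psi$ is the same as prescribing $\psi$ and reading off $g=-\frac{n}{2}\Delta_{\omega}^{Ch}\psi+\Gamma(\{\omega\})\psi$. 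The whole point is that $\psi$ may be chosen to be an arbitrary smooth function.

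Concretely, I would first choose $\psi_0\in C^{\infty}(M,\R)$ such that $\psi_0(p_0)<0$ for some $p_0\in M$ while $\int_M\psi_0 f_0\frac{\omega^n}{n!}>0$. Such functions clearly exist: fix a coordinate ball around $p_0$ and a smooth bump $b\ge0$ supported in it with $b(p_0)=2$, and set $\psi_0=1-b$; since $f_0>0$ is bounded on $M$, the quantity $\int_M b f_0\frac{\omega^n}{n!}$ can be made smaller than $Vol(M,\omega)$ by shrinking the support of $b$, so $\int_M\psi_0 f_0\frac{\omega^n}{n!}=Vol(M,\omega)-\int_M b f_0\frac{\omega^n}{n!}>0$, whereas $\psi_0(p_0)=-1<0$. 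Then I would define $g:=-\frac{n}{2}\Delta_{\omega}^{Ch}\psi_0+\Gamma(\{\omega\})\psi_0\in C^{\infty}(M,\R)$, so that $\psi_0$ is a solution of \eqref{eq:5} for this $g$ and hence, by uniqueness, is \emph{the} solution.

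It remains to check the sign condition and derive the contradiction. Using that $f_0\in\ker((\Delta_{\omega}^{Ch})^*)$, one has $\int_M(\Delta_{\omega}^{Ch}\psi_0)f_0\frac{\omega^n}{n!}=\langle\psi_0,(\Delta_{\omega}^{Ch})^*f_0\rangle_{L^2(M)}=0$, and therefore $\int_M g f_0\frac{\omega^n}{n!}=\Gamma(\{\omega\})\int_M\psi_0 f_0\frac{\omega^n}{n!}<0$ because $\Gamma(\{\omega\})<0$. Thus $g$ satisfies the necessary condition \eqref{eq:*}. On the other hand, if $g$ were the Chern scalar curvature of some metric conformal to $\omega$, then Proposition \ref{prop:2.6} a) would force $\psi_0>0$ everywhere on $M$, contradicting $\psi_0(p_0)<0$; hence $g$ is not the Chern scalar curvature of any metric conformal to $\omega$. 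There is no serious obstacle here: the only point requiring a little care is arranging simultaneously that $\psi_0$ is negative somewhere and that its $f_0$-weighted mean is positive, which the localized bump construction handles. The conceptual content is simply the observation that inverting $L_0$ (equivalently, uniqueness in \eqref{eq:5}) turns the question "which $g$ can occur" into "which $\psi_0$ can occur", and the latter ranges over all of $C^{\infty}(M,\R)$, in particular over functions that change sign.
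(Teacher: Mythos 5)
Your proposal is correct and follows essentially the same route as the paper: both reverse-engineer $g$ from a chosen function that is negative somewhere (so that it is the unique solution of \eqref{eq:5}), verify the condition \eqref{eq:*} by using $f_0\in\ker((\Delta_{\omega}^{Ch})^*)$ to kill the Laplacian term, and conclude via part a) of Proposition \ref{prop:2.6}. The only difference is cosmetic: the paper takes $\psi$ with $\int_M\psi f_0\frac{\omega^n}{n!}=0$ and adds a small constant $a$, while you build $\psi_0$ directly with a bump so that it is negative somewhere and has positive $f_0$-weighted mean.
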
\begin{proof}Fix $\psi\in C^{\infty}(M,\R)\setminus\{0\}$ such that $\int_M\psi f_0\frac{\omega^n}{n!}=0.$ Choosing $0<a<-\min_M\psi$, we have that $\psi+a$ changes sign and we can define \[g=\frac{n}{2}\left(-\Delta_{\omega}^{Ch}\psi+\frac{2}{n}\Gamma(\{\omega\})(\psi+a)\right)\in C^{\infty}(M,\mathbb{R}).\]  We can easily see that\[\int_Mgf_0\frac{\omega^n}{n!}=\Gamma(\{\omega\})aVol(M,\omega)<0\] and \[\Delta_{\omega}^{Ch}(\psi+a)-\frac{2}{n}\Gamma(\{\omega\})(\psi+a)=-\frac{2}{n}g.\] The assertion now follows from this and part a) of Proposition \ref{prop:2.6}.\end{proof}\begin{rmk}\label{rmk: 2.9}The proof of Corollary \ref{cor:2.8} suggests a method whereby we can construct explicit examples of smooth functions that satisfies condition (\ref{eq:*}) which cannot be the Chern scalar curvature of a metric conformal to $\omega$. Indeed, we can choose $\psi'\in C^{\infty}(M,\mathbb{R})$ which is not constant and define $\psi=\psi'+ k$, where $k\in \mathbb{R}$ such that $\int_M\psi f_0\frac{\omega^n}{n!}=0$. Then, following the steps of the proof, we can obtain a function which satisfies (\ref{eq:*}) that cannot be the Chern scalar curvature of a metric conformal to $\omega.$  \end{rmk}
Corollary \ref{cor:2.8} states that not all the functions satisfying the condition (\ref{eq:*}) can be Chern scalar curvature of  a metric conformal to $\omega.$ So, we concentrate on searching solutions of the Chern scalar curvature problem within the set of  metrics conformally equivalent to $\omega$, namely, we search a biholomorphism $\varphi$ of $M$ and a function $u \in C^{\infty}(M,\mathbb{R})$ such that $S^{Ch}(\exp(\frac{2u}{n})\varphi^*\omega)=g$. Using the same methods we used above, this problem is equivalent to find $\varphi\in Aut(M)$ such that the equation\begin{equation}\Delta_{\omega}^{Ch}u+\Gamma(\{\omega\})=(g\circ\varphi)\exp\left(\frac{2u}{n}\right)\tag{9}\end{equation} admits a solution. We can observe immediately that the automorphism $\varphi$ must be such that \[\int_M(g\circ\varphi)f_0\frac{\omega^n}{n!}<0.\] This condition is a first obstruction to our study. We do not know if, in general, there exists an automorphism of $M$ such that the condition above can be satisfied. Nevertheless, we concentrate on searching sufficient conditions on $g$ so that  it is the Chern scalar curvature of a metric conformally equivalent to $\omega.$ The part c) of the Proposition \ref{prop:2.6} implies directly a sufficient condition.\begin{prop}\label{prop:2.8}Let $M^n$ be a connected, compact complex manifold with $n\ge2$ endowed with a Hermitian metric $\omega$ such that $S^{Ch}(\omega)=\Gamma(\{\omega\})<0.$ Suppose that $g\in C^{\infty}(M,\R)$ is negative somewhere on $M$. Then, $g$ is the Chern scalar curvature of a metric conformally equivalent to $\omega$ if there exists $\varphi\in Aut(M)$ such that $\int_M(g\circ\varphi) f_0\frac{\omega^n}{n!}<0$ and $c(g\circ\varphi)<\Gamma(\{\omega\})$. \end{prop}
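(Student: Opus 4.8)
The plan is to deduce Proposition \ref{prop:2.8} directly from part c) of Proposition \ref{prop:2.6}, applied to the composition $g\circ\varphi$ rather than to $g$ itself. I would start from the reduction established just before the statement: a metric conformally equivalent to $\omega$ whose Chern scalar curvature equals $g$ exists if and only if there is some $\varphi\in Aut(M)$ for which equation (9), namely $\Delta_{\omega}^{Ch}u+\Gamma(\{\omega\})=(g\circ\varphi)\exp\left(\frac{2u}{n}\right)$, admits a solution $u\in C^{\infty}(M,\R)$. The point to keep in mind while quoting this reduction is that the Chern Laplacian and the Chern scalar curvature are natural with respect to biholomorphisms and that $S^{Ch}(\omega)=\Gamma(\{\omega\})$ is constant, so pulling back by $\varphi^{-1}$ turns the conformal problem on $\varphi^{*}\omega$ into precisely this equation written on $\omega$, via Proposition \ref{prop:1.6}.

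The key observation is then purely formal: for a fixed $\varphi$, equation (9) is exactly the equation $(6_c)$ appearing in Proposition \ref{prop:2.6}, with $c=\Gamma(\{\omega\})$ and with the prescribed function $g$ there replaced by $g\circ\varphi$. So I would simply check that $g\circ\varphi$ is admissible for part c) of Proposition \ref{prop:2.6}: it is smooth; it is negative somewhere on $M$, since $g$ is and $\varphi$ is a diffeomorphism; and $\int_M(g\circ\varphi)f_0\frac{\omega^n}{n!}<0$ by hypothesis. Part c) of Proposition \ref{prop:2.6} then furnishes a threshold $c(g\circ\varphi)\in[-\infty,0)$ such that $(6_c)$ for $g\circ\varphi$ has a solution for every $c\in(c(g\circ\varphi),0)$.

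To conclude, the second hypothesis $c(g\circ\varphi)<\Gamma(\{\omega\})$ together with $\Gamma(\{\omega\})<0$ places $\Gamma(\{\omega\})$ inside the interval $(c(g\circ\varphi),0)$ (and this is automatic when $c(g\circ\varphi)=-\infty$). Hence $(6_{\Gamma(\{\omega\})})$ for $g\circ\varphi$, i.e.\ equation (9) for this $\varphi$, has a solution $u\in C^{\infty}(M,\R)$, and by the reduction recalled above $g$ is the Chern scalar curvature of a metric conformally equivalent to $\omega$. There is no genuine obstacle at this level: the whole substance sits in part c) of Proposition \ref{prop:2.6}, whose proof rests on the sub- and supersolution method of Theorem \ref{thm:2.3} and on constructing a positive function satisfying inequality (7). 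The only points that demand a little care here are the bookkeeping in the conformal-equivalence reduction --- in particular keeping track of whether it is $\varphi$ or $\varphi^{-1}$ that intervenes --- and verifying that $g\circ\varphi$ inherits the two hypotheses needed to invoke that proposition.
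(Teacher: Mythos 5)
Your proposal is correct and follows exactly the route the paper intends: the paper gives no separate proof of Proposition \ref{prop:2.8}, stating only that it is a direct consequence of part c) of Proposition \ref{prop:2.6}, and your argument makes that deduction explicit by identifying equation (9) with $(6_c)$ for $g\circ\varphi$ at $c=\Gamma(\{\omega\})\in(c(g\circ\varphi),0)$ and checking the admissibility of $g\circ\varphi$. The only extra content you add (the $\varphi$ versus $\varphi^{-1}$ bookkeeping, harmless since $Aut(M)$ is a group) is consistent with the paper's reduction.
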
 Unfortunately, we do not have an explicit expression for the constant $c(g\circ\varphi).$ On the other hand, we have some estimates on that constant. These estimates allow us to improve the Proposition \ref{prop:2.8}. \begin{prop}\label{prop:2.9}Let $M^n$ be a connected, compact complex manifold with $n\ge2$ endowed with a Hermitian metric $\omega$ such that $S^{Ch}(\omega)=\Gamma(\{\omega\})<0.$ Suppose that $g\in C^{\infty}(M,\R)$ is negative somewhere on $M$. Then, $g$ is the Chern scalar curvature of a metric conformally equivalent to $\omega$ if there exist $\varphi\in Aut(M)$ and $p>n$ such that $\int_M(g\circ\varphi) f_0\frac{\omega^n}{n!}<0$ and \[\left\lVert g\circ\varphi-\frac{1}{Vol(M,\omega)}\int_M(g\circ\varphi)f_0\frac{\omega^n}{n!}\right\rVert_{L^p(M)}< C,\] where $C=C(M,\omega,g,p)>0$.  \end{prop}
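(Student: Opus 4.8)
The plan is to deduce the statement from Proposition~\ref{prop:2.8} by exhibiting, under the stated $L^{p}$ smallness, a supersolution of the equation $(9)$, i.e. of $\Delta_{\omega}^{Ch}u+\Gamma(\{\omega\})=(g\circ\varphi)\exp(\frac{2u}{n})$. Since $g$ is negative somewhere and $\varphi$ is a diffeomorphism, $g\circ\varphi$ is negative somewhere and (by hypothesis) $\int_{M}(g\circ\varphi)f_{0}\frac{\omega^{n}}{n!}<0$; hence, once a supersolution of $(9)$ is available, Corollary~\ref{cor:2.4} applied to $g\circ\varphi$ produces a subsolution below it, and Theorem~\ref{thm:2.3} yields a smooth solution $u$ of $(9)$. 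By Proposition~\ref{prop:1.6} the metric $\exp(\frac{2u}{n})\omega$ then has Chern scalar curvature $g\circ\varphi$; using that $S^{Ch}$ is a local invariant, so that $S^{Ch}(\psi^{*}\eta)=S^{Ch}(\eta)\circ\psi$ for every biholomorphism $\psi$, the metric $(\varphi^{-1})^{*}\big(\exp(\frac{2u}{n})\omega\big)$ is conformally equivalent to $\omega$ and has Chern scalar curvature $g$.

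To build the supersolution I would repeat, with $g$ replaced by $g\circ\varphi$, the construction in the proof of part~c) of Proposition~\ref{prop:2.6}. Write $I:=\int_{M}(g\circ\varphi)f_{0}\frac{\omega^{n}}{n!}<0$ and $\bar g:=g\circ\varphi-\frac{I}{Vol(M,\omega)}$, so that $\int_{M}\bar g\,f_{0}\frac{\omega^{n}}{n!}=0$, and let $\phi\in C^{\infty}(M,\R)$ solve $\Delta_{\omega}^{Ch}\phi=-\frac{2}{n}\bar g$ with the normalisation $\int_{M}\phi\frac{\omega^{n}}{n!}=0$ (it exists because the right-hand side is $L^{2}$-orthogonal to $f_{0}$). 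As in that proof, if $a\in\R$ satisfies $\phi+a>0$, $a\ge\max_{M}\big\{\tfrac{2n\,Vol(M,\omega)}{|I|}\,\omega(d\phi,d\phi)-\phi\big\}$ and $\Gamma(\{\omega\})\ge\tfrac{I}{2\,Vol(M,\omega)\,\min_{M}(\phi+a)}$, then $u_{+}:=-\tfrac{n}{2}\log(\phi+a)$, whose exponential $\exp(-\tfrac{2u_{+}}{n})=\phi+a$ solves the inequality $(7)$ for $g\circ\varphi$ with $c=\Gamma(\{\omega\})$, is a supersolution of $(9)$. A short computation — in which the term $-\min_{M}\phi$ cancels between the two constraints on $a$ — shows that such an $a$ exists as soon as
\[
\lVert\omega(d\phi,d\phi)\rVert_{L^{\infty}(M)}<\frac{I^{2}}{4n\,Vol(M,\omega)^{2}\,|\Gamma(\{\omega\})|}.
\]

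It remains to control $\lVert\omega(d\phi,d\phi)\rVert_{L^{\infty}(M)}$ by $\lVert\bar g\rVert_{L^{p}(M)}$. Since $\Delta_{\omega}^{Ch}$ is elliptic of index $0$ with kernel the constants (Remark~\ref{rmk:1.3}), its restriction to the functions of vanishing $\tfrac{\omega^{n}}{n!}$-mean is a Banach-space isomorphism onto its closed image, so the Calder\'on--Zygmund estimate gives $\lVert\phi\rVert_{W^{2,p}(M)}\le c_{1}\lVert\Delta_{\omega}^{Ch}\phi\rVert_{L^{p}(M)}=\tfrac{2c_{1}}{n}\lVert\bar g\rVert_{L^{p}(M)}$ with $c_{1}=c_{1}(M,\omega,p)$; as $p>n$, the Sobolev embedding $W^{2,p}(M)\hookrightarrow C^{1,1-n/p}(M)$ yields $\lVert d\phi\rVert_{L^{\infty}(M)}\le c_{2}\lVert\bar g\rVert_{L^{p}(M)}$ and hence $\lVert\omega(d\phi,d\phi)\rVert_{L^{\infty}(M)}\le c_{3}\lVert\bar g\rVert_{L^{p}(M)}^{2}$ with $c_{3}=c_{3}(M,\omega,p)$. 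Plugging this into the displayed inequality, the supersolution $u_{+}$ — hence a solution of $(9)$ — exists whenever $\lVert\bar g\rVert_{L^{p}(M)}<C$.

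The point I expect to be the main obstacle is the shape of $C$: the argument above produces the threshold $C=\tfrac{|I|}{2\,Vol(M,\omega)\sqrt{n\,c_{3}\,|\Gamma(\{\omega\})|}}$, which involves $I=\int_{M}(g\circ\varphi)f_{0}\frac{\omega^{n}}{n!}$ and so, a priori, depends on $\varphi$ and not only on $M,\omega,g,p$. To obtain the advertised dependence I would exploit that $\varphi$, being a diffeomorphism, leaves the range of $g$ unchanged; in particular $\min_{M}(g\circ\varphi)=\min_{M}g$, which together with $g\circ\varphi=\bar g+\frac{I}{Vol(M,\omega)}$ and $\int_{M}\bar g\,f_{0}\frac{\omega^{n}}{n!}=0$ gives $|I|=(\min_{M}\bar g-\min_{M}g)\,Vol(M,\omega)$, a quantity $\le|\min_{M}g|\,Vol(M,\omega)$ and, provided $\lVert\bar g\rVert$ is small, bounded away from $0$ in terms of $M,\omega,g$ only; together with the crude bound $|I|\le\lVert g\rVert_{L^{\infty}(M)}Vol(M,\omega)$ this lets one replace the $\varphi$-dependent threshold by a constant $C=C(M,\omega,g,p)$. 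Once this bookkeeping is settled, the supersolution $u_{+}$ is at hand, and Corollary~\ref{cor:2.4} together with Theorem~\ref{thm:2.3} completes the proof.
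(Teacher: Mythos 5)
Your argument is, in substance, the paper's own proof: you reduce to Proposition \ref{prop:2.8} via the estimate (\ref{eq:8}) on $c(g\circ\varphi)$, build the supersolution from a positive solution $\phi+a$ of the inequality (\ref{eq:7}) with $\Delta^{Ch}_{\omega}\phi=-\frac{2}{n}\bar g$, and convert the resulting condition $\max_M\omega(d\phi,d\phi)<\frac{I^{2}}{4n\,Vol(M,\omega)^{2}\lvert\Gamma(\{\omega\})\rvert}$ into an $L^{p}$ smallness condition through the elliptic estimate $\lVert d\phi\rVert_{C^{0}(M)}\le K\lVert\Delta^{Ch}_{\omega}\phi\rVert_{L^{p}(M)}$ for $p>n$; the threshold you obtain is exactly the paper's $C=-\frac{\sqrt{n}\int_M(g\circ\varphi)f_0\frac{\omega^n}{n!}}{4K\,Vol(M,\omega)\sqrt{-\Gamma(\{\omega\})}}$, and the passage from a solution of $(9)$ to a metric conformally equivalent to $\omega$ with curvature $g$ is handled the same way.

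The one place where you go beyond the paper is the final paragraph, where you try to replace this threshold by a constant depending only on $(M,\omega,g,p)$, and that step does not work as written. From $\min_M(g\circ\varphi)=\min_Mg$ you correctly get $\lvert I\rvert=\bigl(\min_M\bar g-\min_Mg\bigr)Vol(M,\omega)$ and the upper bound $\lvert I\rvert\le-\min_Mg\,Vol(M,\omega)$ (since $\int_M\bar g\,f_0\frac{\omega^n}{n!}=0$ forces $\min_M\bar g\le0$); but what you need is a \emph{lower} bound on $\lvert I\rvert$, i.e.\ a lower bound on $\min_M\bar g$, and smallness of $\lVert\bar g\rVert_{L^{p}(M)}$ with $p<\infty$ gives no pointwise control: $\bar g$ can dip on a set of small measure to values near $\min_Mg$ while its $L^{p}$ norm is tiny, so $\lvert I\rvert$ can be arbitrarily small under your hypotheses and the claim that it is ``bounded away from $0$ in terms of $M,\omega,g$ only'' is unjustified. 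Note, however, that the paper does not do better: its proof ends with the $\varphi$-dependent constant displayed above, and the dependence ``$C=C(M,\omega,g,p)$'' in the statement is loose on exactly this point. So your proof is correct and complete if you state the threshold as the explicit quantity involving $\int_M(g\circ\varphi)f_0\frac{\omega^n}{n!}$; only the attempted upgrade to a genuinely $\varphi$-independent constant is a gap, and it is not one the paper closes either.
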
\begin{proof}Suppose that there exists $\varphi\in Aut(M)$ such that $\int_M(g\circ\varphi)f_0\frac{\omega^n}{n!}<0.$ Thanks to  Proposition \ref{prop:2.8}, it is sufficient to impose that $c(g\circ\varphi)<\Gamma(\{\omega\}).$ Using (\ref{eq:8}), we obtain that it is sufficient to impose the following condition\[\frac{\int_M(g\circ\varphi)f_0\frac{\omega^n}{n!}}{2Vol(M,\omega)\min_M\{\phi+a\}}<\Gamma(\{\omega\}),\] where $\Delta_{\omega}^{Ch}\phi=\frac{2}{n}(-g\circ\varphi+\frac{1}{Vol(M,\omega)}\int_M(g\circ\varphi)f_0\frac{\omega^n}{n!})$ and \[a=\max_M\left\{-\frac{2nVol(M,\omega)}{\int_M(g\circ\varphi)f_0\frac{\omega^n}{n!}}\omega(d\phi,d\phi)-\phi\right\}.\]With these choices, we have\[\min_M\{\phi+a\}\le-\frac{2nVol(M,\omega)}{\int_M(g\circ\varphi)f_0\frac{\omega^n}{n!}}\max_M\omega(d\phi,d\phi).\] Then, using this inequality, we obtain \[\frac{\int_M(g\circ\varphi)f_0\frac{\omega^n}{n!}}{2Vol(M,\omega)\min_M\{\phi+a\}}\le -\frac{(\int_M(g\circ\varphi)f_0\frac{\omega^n}{n!})^2}{4nVol(M,\omega)^2\max_M\omega(d\phi,d\phi)}.\] By the standard elliptic theory, we know that there exists, if $p>n$, a constant $K=K(M,\omega,p)$ such that \[\lVert du\rVert_{C^0(M)}\le K\lVert\Delta_{\omega}^{Ch}u\rVert_{L^p(M)}.\] So, by straightforward calculations, it is sufficient to impose that there exists $p>n$ such that \[\left\lVert g\circ\varphi-\frac{1}{Vol(M,\omega)}\int_M(g\circ\varphi)f_0\frac{\omega^n}{n!}\right\rVert_{L^p(M)}<-\frac{\sqrt{n}\int_M(g\circ\varphi)f_0\frac{\omega^n}{n!}}{4KVol(M,\omega)\sqrt{-\Gamma(\{\omega\})}}=C.\] \end{proof} Unfortunately, the condition found in the Proposition above is not easily checkable. As we can see from the proof, the constant $C$ depends on easily computable quantities like $Vol(M,\omega)$ and $\Gamma(\{\omega\})$ but also on $K$, that is the norm of the embedding $W^{2,p}(M)\hookrightarrow C^{1}(M)$, which is not explicit, in general, for compact manifolds. 

\subsection{Case $\Gamma(\{\omega\})=0$} 

 In this case, the condition (\ref{eq:2}) guarantees that $g\in C^{\infty}(M,\R)$ must either be identically zero or change sign on $M$. Thanks to Theorem \ref{thm:1.10}, we can choose a metric conformal to $\omega$ such that its Chern scalar curvature is identically zero. We will indicate such metric with $\omega$ and with $f_0$ its eccentricity function. The equation (\ref{eq:3})  can be rewritten as follows:\begin{equation}\label{eq:10}\Delta_{\omega}^{Ch}u=g\exp\left(\frac{2u}{n}\right).\tag{10}\end{equation} As we already did in the previous case, we can multiply by $\exp(-\frac{2u}{n})$ the equation (\ref{eq:10}), use the formula (\ref{eq:4}), multiply by $f_0$ and then integrate on $M$ in order to obtain the following additional necessary condition:\[\int_Mgf_0\frac{\omega^n}{n!}<0.\] In general, we can guess if the equation (\ref{eq:3}) can be interpreted as the Euler-Lagrange equation, with respect to a pairing, associated to an appropriate functional.  Unfortunately, considering the $L^2$ standard pairing, this interpretation is not possible unless the metric we consider is balanced.\begin{prop}[\cite{AnCaSp}, Proposition 5.3]Let $M^n$ be a connected, compact complex manifold endowed with a Hermitian metric $\omega$. The 1-form on $C^{\infty}(M,\R)$\[\alpha\colon h\mapsto\int_Mh\omega(df,\theta)\frac{\omega^n}{n!}\] is never closed, beside the case when it is identically zero, which happens if and only if $\omega$ is balanced. 
It follows that equation (\ref{eq:3}) can be seen as the Euler-Lagrange equation for the standard $L^2$ pairing if and only if $\omega$ is balanced. In this case, the functional takes the form:\[\mathcal{F}(u)=\int_M(\omega(du,du)+ S^{Ch}(\omega)u)\frac{\omega^n}{n!}, \ \ \ \forall u\in W^{1,2}(M)\] subject to the constraint \[ \int_Mg\exp\left(\frac{2u}{n}\right)\frac{\omega^n}{n!}=0.\]
\end{prop}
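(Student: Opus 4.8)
The whole proposition reduces to a self-adjointness computation for the first-order operator attached to the torsion form $\theta$, and the plan has two steps and then a short assembly. First I would use Gauduchon's identity (Theorem~\ref{thm:1.1}), $\Delta^{Ch}_\omega u=\Delta_\omega u+\omega(du,\theta)$, to rewrite the left-hand side of (\ref{eq:3}) as $\Delta_\omega u+\omega(du,\theta)+S^{Ch}(\omega)$, so that (\ref{eq:3}) becomes $\Delta_\omega u+\omega(du,\theta)+S^{Ch}(\omega)=g\exp(2u/n)$. With respect to the standard $L^2$ pairing $\langle f_1,f_2\rangle=\int_M f_1f_2\,\frac{\omega^n}{n!}$ on $C^\infty(M,\R)$ (or $W^{1,2}(M)$), three of these four terms are $L^2$-gradients of functionals: $\Delta_\omega u$ is the gradient of $u\mapsto\tfrac12\int_M\omega(du,du)\,\frac{\omega^n}{n!}$, the fixed function $S^{Ch}(\omega)$ is the gradient of the linear functional $u\mapsto\int_M S^{Ch}(\omega)u\,\frac{\omega^n}{n!}$, and $g\exp(2u/n)$ is, up to the scalar $\tfrac2n$, the gradient of $u\mapsto\int_M g\exp(2u/n)\,\frac{\omega^n}{n!}$, whose vanishing is exactly the integrability condition (\ref{eq:2}) in the case $\Gamma(\{\omega\})=0$. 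Hence (\ref{eq:3}) is the Euler-Lagrange equation of a functional for the $L^2$ pairing, subject to that constraint, exactly when the one remaining term, which is precisely $\alpha$, is an $L^2$-gradient, i.e. exactly when the $1$-form $\alpha$ is exact.

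Next I would analyse $\alpha$ directly. Setting $Af:=\omega(df,\theta)=X(f)$, where $X:=\theta^\sharp$ is the metric dual vector field of $\theta$, the $1$-form $\alpha_f(h)=\langle Af,h\rangle$ is \emph{linear} in the base point $f$; treating $h_1,h_2$ as constant vector fields on the affine space $C^\infty(M,\R)$ one finds $d\alpha(h_1,h_2)=\langle Ah_1,h_2\rangle-\langle Ah_2,h_1\rangle=\langle(A-A^*)h_1,h_2\rangle$. So $\alpha$ is closed if and only if $A$ is $L^2$-self-adjoint, and when $A=A^*$ the quadratic functional $\Phi(f):=\tfrac12\int_M f\,\omega(df,\theta)\,\frac{\omega^n}{n!}$ satisfies $d\Phi=\alpha$; thus for $\alpha$ being closed and being exact are equivalent, directly via $\Phi$ and with no need for an infinite-dimensional Poincaré lemma. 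A single integration by parts against $\frac{\omega^n}{n!}$, using $\int_M X(\varphi)\,\frac{\omega^n}{n!}=-\int_M\varphi\,(\operatorname{div}X)\,\frac{\omega^n}{n!}$ together with $\operatorname{div}(\theta^\sharp)=-d^*\theta$, gives $A^*h=-Ah+(d^*\theta)h$, i.e. $A+A^*=(d^*\theta)\operatorname{Id}$. Therefore $A=A^*$ forces $2A=(d^*\theta)\operatorname{Id}$ as operators; applying both sides to the constant function $1$ gives $d^*\theta\equiv0$, whence $A\equiv0$, so $\omega(df,\theta)\equiv0$ for every $f$, so $\theta\equiv0$, i.e. $\omega$ is balanced. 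The converse is immediate, since $\omega$ balanced means $\theta=0$ and then $\alpha\equiv0$. This proves that $\alpha$ is closed if and only if it vanishes identically if and only if $\omega$ is balanced.

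Combining the two steps, (\ref{eq:3}) is an $L^2$ Euler-Lagrange equation, with the stated constraint, exactly when $\alpha$ is exact, i.e. exactly when $\omega$ is balanced; and in that case $\theta=0$ reduces (\ref{eq:3}) to $\Delta_\omega u+S^{Ch}(\omega)=g\exp(2u/n)$, which is the Euler-Lagrange equation, for the $L^2$ pairing, of $\mathcal F(u)=\int_M\bigl(\omega(du,du)+S^{Ch}(\omega)u\bigr)\frac{\omega^n}{n!}$ on $W^{1,2}(M)$ restricted to the constraint surface $\{\int_M g\exp(2u/n)\,\frac{\omega^n}{n!}=0\}$, the free Lagrange multiplier accounting for the scalar factor in front of $g\exp(2u/n)$.

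I do not expect a genuine obstacle here: the entire content is the integration-by-parts identity $A+A^*=(d^*\theta)\operatorname{Id}$ together with the bookkeeping of the first step that isolates $\alpha$ as the only potentially non-variational piece of (\ref{eq:3}). The two points deserving a little care are (i) using the sign conventions of Gauduchon's formula, of the definition $d\omega^{n-1}=\theta\wedge\omega^{n-1}$, and of the relation $d^*\theta=-\operatorname{div}(\theta^\sharp)$ consistently with one another; and (ii) the passage from closedness to exactness of the linear $1$-form $\alpha$, handled above by the explicit primitive $\Phi$.
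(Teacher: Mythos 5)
The paper itself offers no proof of this statement: it is quoted directly from \cite[Proposition 5.3]{AnCaSp}. Your argument is correct and is essentially the computation carried out there: your operator identity $A+A^{*}=(d^{*}\theta)\,\mathrm{Id}$ is exactly the integration by parts $\int_M h_1\,\omega(dh_2,\theta)\,\frac{\omega^n}{n!}+\int_M h_2\,\omega(dh_1,\theta)\,\frac{\omega^n}{n!}=\int_M h_1h_2\,d^{*}\theta\,\frac{\omega^n}{n!}$ underlying the closedness test for $\alpha$, and your two-step conclusion (evaluate on the constant function to get $d^{*}\theta=0$, then conclude $A\equiv 0$, hence $\theta=0$) matches the cited proof, with the explicit primitive $\Phi$ a clean way of settling the closed-versus-exact point. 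The only caveat is one of normalization inherited from the statement itself rather than from your proof: with $\mathcal F(u)=\int_M(\omega(du,du)+S^{Ch}(\omega)u)\frac{\omega^n}{n!}$ the constrained Euler--Lagrange equation is $2\Delta_\omega u+S^{Ch}(\omega)=\frac{2\lambda}{n}\,g\exp\left(\frac{2u}{n}\right)$, so (\ref{eq:3}) is recovered only up to constants, which the Lagrange multiplier and a shift of $u$ absorb precisely in the case $S^{Ch}(\omega)=0$ where the proposition is actually applied (Theorem \ref{thm:2.11}); your closing remark about the multiplier handles the factor in front of $g\exp\left(\frac{2u}{n}\right)$ but not the relative factor between $\Delta_\omega u$ and $S^{Ch}(\omega)$, which is worth one explicit line.
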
  As observed in \cite[Remark 5.4]{AnCaSp}, we do not know if the functional $\mathcal{F}$ is, in general, bounded below. Clearly, this property is satisfied by imposing that $S^{Ch}(\omega)=0.$ Then, in the hypothesis that $\omega$ is balanced and $S^{Ch}(\omega)=0$, we can apply the variational methods to find a solution of equation (\ref{eq:3}). \begin{thm}\label{thm:2.11}Let $M^n$ be a connected, compact complex manifold with $n\ge 2$. Suppose that there exists a balanced metric $\omega$ such that $S^{Ch}(\omega)=0$. Then, all the functions $g\in C^{\infty}(M,\R)$ changing sign and such that $\int_Mg\frac{\omega^n}{n!}<0$ are Chern scalar curvature of a metric conformal to $\omega$.\end{thm}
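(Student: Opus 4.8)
The plan is to adapt to the Hermitian setting the variational argument that Kazdan and Warner used for the prescribed Gaussian curvature problem on surfaces of zero Euler characteristic (see \cite{KazWar1}). Since $\omega$ is balanced, Theorem \ref{thm:1.1} gives $\Delta_\omega^{Ch}=\Delta_\omega$ on functions and $\ker((\Delta_\omega^{Ch})^*)=\R$ (so the eccentricity function is $f_0\equiv1$); together with $S^{Ch}(\omega)=0$ and Proposition \ref{prop:1.6}, the problem becomes that of solving equation (\ref{eq:10}), namely $\Delta_\omega^{Ch}u=g\exp(2u/n)$. By the Proposition recalled just above, solutions of (\ref{eq:10}) are exactly the critical points of $\mathcal{F}(u)=\int_M\omega(du,du)\frac{\omega^n}{n!}$ restricted to the constraint set $\mathcal{C}=\{u\in W^{1,2}(M)\mid\int_Mg\exp(2u/n)\frac{\omega^n}{n!}=0\}$, the Lagrange multiplier being absorbed by a conformal rescaling $u\mapsto u+\mathrm{const}$. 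So everything reduces to producing a minimizer of $\mathcal{F}$ on $\mathcal{C}$ and checking that the associated multiplier is positive. First I would observe $\mathcal{C}\ne\emptyset$: since $g$ changes sign, $g>0$ on some nonempty open set; choosing $\chi\in C^\infty(M,\R)$, $\chi\ge0$, with $\{\chi>0\}$ contained in that set, the map $t\mapsto\int_Mg\exp(2t\chi/n)\frac{\omega^n}{n!}$ equals $\int_Mg\frac{\omega^n}{n!}<0$ at $t=0$ and tends to $+\infty$ as $t\to+\infty$, hence vanishes for some $t_0>0$. Note also that no constant lies in $\mathcal{C}$, because $\int_Mg\exp(2c/n)\frac{\omega^n}{n!}=e^{2c/n}\int_Mg\frac{\omega^n}{n!}<0$.

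Next I would run the direct method. We have $\mathcal{F}\ge0$, so $m:=\inf_{\mathcal{C}}\mathcal{F}\in[0,+\infty)$; take a minimizing sequence $\{u_k\}\subset\mathcal{C}$. Since both $\mathcal{F}$ and $\mathcal{C}$ are invariant under $u\mapsto u+c$, I normalize $\int_Mu_k\frac{\omega^n}{n!}=0$, so by the Poincaré inequality $\{u_k\}$ is bounded in $W^{1,2}(M)$; passing to a subsequence, $u_k\rightharpoonup u_0$ weakly in $W^{1,2}$, $u_k\to u_0$ in $L^2$ and a.e., and by weak lower semicontinuity $\mathcal{F}(u_0)\le m$. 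The only nontrivial point is to show $u_0\in\mathcal{C}$, i.e.\ to pass to the limit in $\int_Mg\exp(2u_k/n)\frac{\omega^n}{n!}=0$.

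This passage to the limit is \emph{the main obstacle}. In real dimension two it is precisely the Moser--Trudinger inequality that makes $\exp(2u_k)$ equi-integrable along sequences bounded in $W^{1,2}$; here $M$ has real dimension $2n\ge4$, $W^{1,2}(M)$ does not embed into the exponential class, and $\exp(2u_k/n)$ can a priori concentrate, so one cannot conclude directly. The route I would take is to carry out the minimization over the truncated set $\mathcal{C}_\Lambda=\mathcal{C}\cap\{\|u\|_{L^\infty(M)}\le\Lambda\}$ for a large constant $\Lambda$ (still nonempty for $\Lambda$ large, by the construction above): on $\mathcal{C}_\Lambda$ the constraint functional is weakly continuous by dominated convergence (dominating function $e^{2\Lambda/n}$), so the argument of the previous paragraph produces a minimizer $u_\Lambda\in\mathcal{C}_\Lambda$. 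One then has to show that, for $\Lambda$ large enough, the bound $\|u\|_{L^\infty}\le\Lambda$ is inactive, so that $u_\Lambda$ is a genuine critical point of $\mathcal{F}|_{\mathcal{C}}$; equivalently, one needs an $L^\infty$ estimate for these constrained minimizers that is uniform in $\Lambda$. Establishing this $L^\infty$ bound — or, alternatively, ruling out concentration of $\exp(2u_k/n)$ directly by exploiting the minimality of $\mathcal{F}$ together with the balance $\int_{\{g>0\}}g\exp(2u_k/n)\frac{\omega^n}{n!}=\int_{\{g<0\}}(-g)\exp(2u_k/n)\frac{\omega^n}{n!}$ forced by $\mathcal{C}$ — is the delicate analytic core of the theorem, and I expect the bulk of the work to go here (possibly via a regularized/approximation scheme rather than the crude truncation above).

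Granting a bounded minimizer $u_0\in\mathcal{C}$, the conclusion is routine. The constraint map $u\mapsto\int_Mg\exp(2u/n)\frac{\omega^n}{n!}$ has differential $\varphi\mapsto\frac2n\int_M\varphi\,g\exp(2u_0/n)\frac{\omega^n}{n!}$, which is nonzero because $g\not\equiv0$, so $\mathcal{C}$ is a $C^1$ submanifold near $u_0$ and the Lagrange multiplier rule gives $\mu\in\R$ with $\Delta_\omega^{Ch}u_0=\frac\mu n\,g\exp(2u_0/n)$ in the weak sense. Testing against $u_0$: if $\mu=0$ then $\int_M\omega(du_0,du_0)\frac{\omega^n}{n!}=0$, so $u_0$ is constant, contradicting $u_0\in\mathcal{C}$; hence $\mu\ne0$. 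To fix the sign, test instead against $\exp(-2u_0/n)\in L^\infty\subset W^{1,2}$ and integrate by parts (using $\omega$ balanced, so $\Delta_\omega^{Ch}=\Delta_\omega$):
\[\frac\mu n\int_Mg\frac{\omega^n}{n!}=\int_M\omega\bigl(du_0,d(e^{-2u_0/n})\bigr)\frac{\omega^n}{n!}=-\frac2n\int_Me^{-2u_0/n}\,\omega(du_0,du_0)\frac{\omega^n}{n!}<0,\]
the strict inequality holding because $u_0$ is non-constant; since $\int_Mg\frac{\omega^n}{n!}<0$ we conclude $\mu>0$. Then $v:=u_0+\frac n2\log(\mu/n)$ satisfies $\Delta_\omega^{Ch}v=g\exp(2v/n)$, i.e.\ equation (\ref{eq:10}). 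Finally, since $v\in L^\infty$ the right-hand side $g\exp(2v/n)$ is bounded, so elliptic ($L^p$) regularity gives $v\in W^{2,p}(M)$ for every $p$, hence $v\in C^{1,\alpha}(M)$, and a Schauder bootstrap yields $v\in C^\infty(M,\R)$; then $\exp(2v/n)\omega$ is a Hermitian metric conformal to $\omega$ whose Chern scalar curvature equals $g$.
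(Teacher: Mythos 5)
Your overall strategy coincides with the paper's: minimize $\mathcal{F}(u)=\int_M\omega(du,du)\frac{\omega^n}{n!}$ over the constraint set defined by $\int_Mg\exp\left(\frac{2u}{n}\right)\frac{\omega^n}{n!}=0$ (the paper adds the normalization $\int_Mu\frac{\omega^n}{n!}=0$, exactly as you do for the minimizing sequence), extract a weak $W^{1,2}$ limit, apply the Lagrange multiplier rule, determine the sign of the multiplier by testing against $\exp(-\frac{2u_0}{n})$ together with $\int_Mg\frac{\omega^n}{n!}<0$, absorb the multiplier into an additive constant, and bootstrap regularity. Your end-game is carried out at least as carefully as in the paper (you make explicit why the multiplier is nonzero and why the final inequality is strict because the minimizer is non-constant).

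The problem is the step you yourself flag as open: showing that the weak limit of the minimizing sequence still satisfies the constraint. You replace the direct passage to the limit by a truncation $\mathcal{C}_\Lambda=\mathcal{C}\cap\{\lVert u\rVert_{L^\infty(M)}\le\Lambda\}$ and then leave unproved the uniform-in-$\Lambda$ $L^\infty$ bound without which the truncated minimizers need not be critical points of $\mathcal{F}$ restricted to $\mathcal{C}$; since the whole theorem hinges on producing a genuine constrained minimizer, the argument as written does not establish the statement. For comparison, the paper does not introduce any truncation at this point: it asserts that $\exp(\cdot)$ is continuous from $W^{1,2}(M)$ to $W^{1,2}(M)$ and deduces directly that the weak limit $v$ lies in $B$, then concludes as you do. Your concern about this step is legitimate --- in real dimension $2n\ge4$ the space $W^{1,2}(M)$ does not embed into the exponential class, so the weak continuity (indeed even the finiteness) of $u\mapsto\int_Mg\exp\left(\frac{2u}{n}\right)\frac{\omega^n}{n!}$ on $W^{1,2}(M)$ is precisely the delicate point, and the paper's treatment of it is very brief --- but identifying the difficulty is not the same as resolving it, so as a proof your proposal has a genuine gap exactly there.
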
\begin{proof}Define \[B=\left\{\phi\in W^{1,2}(M)\middle| \ \ \int_Mg\exp\left(\frac{2\phi}{n}\right)\frac{\omega^n}{n!}=0, \int_M\phi\frac{\omega^n}{n!}=0 \right\}\] and note that $B\ne\emptyset$ because $g$ changes sign on $M$. By straightforward calculations, it follows that $\exp(-)\colon W^{1,2}(M)\to W^{1,2}(M)$ is continuous. Define $a=\inf_B\mathcal{F}$. We consider a  minimizing sequence $\{v_k\}_{k\in\N}\subset W^{1,2}(M)$ such that $\{\mathcal{F}(v_k)\}_{k\in\N}$ goes to $a$ decreasingly. We can choose $v_0\in B$ and define $b=\mathcal{F}(v_0)\ge0$. Without loss of generality, we can suppose that $\mathcal{F}(v_k)\le b$, $\forall k\in \N$. Thanks to Poincaré inequality, we verify that\[ \lVert v_k\rVert^2_{W^{1,2}(M)}\le C \lVert dv_k\rVert^2_{L^2(M)}=C\mathcal{F}(v_k)\le Cb.\] So, $\{v_k\}_{k\in\N}$ is bounded in $W^{1,2}(M)$ and we know that a bounded set is weakly compact, thanks to Banach-Alaouglu-Bourbaki Theorem. So, up to subsequences, there exists a function $v\in W^{1,2}(M)$ such that    $v_k\rightharpoonup v$ in $W^{1,2}(M).$ This fact implies straightforwardly, using the continuity of $\exp(\frac{2-}{n})$,  that $v\in B$. Moreover, using again the Poincarè inequality, we know that $\lVert-\rVert_{W^{1,2}(M)}$ is equivalent to $\sqrt{\mathcal{F}(-)}.$ So, using the lower semicontinuity of the weak convergence with respect to $\sqrt{\mathcal{F}(-)}$, we have \[\sqrt{\mathcal{F}(v)}\le\liminf_{k\to+\infty}\sqrt{\mathcal{F}(v_k)}\le \sqrt{\mathcal{F}(v_k)}, \ \ \forall k\in\mathbb{N}.\]So, $v\in B$ minimizes the functional $\mathcal{F}$ on $B$. By standard Lagrange multiplier theory, we have two constant $\lambda,\mu\in \R$ such that \begin{equation}\label{eq:11}\int_M\left(\omega(d\varphi,dv)+\frac{2}{n}\lambda\varphi g\exp\left(\frac{2v}{n}\right)+\frac{2}{n}\mu\varphi\right)\frac{\omega^n}{n!}=0, \ \ \forall \varphi\in W^{1,2}(M).\tag{11}\end{equation} Choosing $\varphi=1$, we obtain automatically that $\mu=0.$ Moreover, if we choose $\varphi=\exp(-\frac{2v}{n})$ we obtain \[\int_M-\frac{2}{n}\exp\left(-\frac{2v}{n}\right)\omega(dv,dv)\frac{\omega^n}{n!}+\frac{2}{n}\lambda\int_Mg\frac{\omega^n}{n!}=0.\] Using the hypothesis that $\int_Mg\frac{\omega^n}{n!}<0$, we obtain that $\lambda<0.$ So, we can write $-\lambda=\frac{n}{2}\exp(\frac{2\gamma}{n}),$ where $\gamma\in \R$ is an appropriate constant.  Integrating by parts the equation (\ref{eq:11}), we verify that the function $v$ is a solution of\[\Delta_{\omega}^{Ch}v=g\exp\left(\frac{2(v+\gamma)}{n}\right).\] Then,  $u=v+\gamma\in W^{1,2}(M)$ is a solution of (\ref{eq:10}). Again using the Calder\'on-Zygmund inequality, we prove that $u\in C^{\infty}(M,\R)$.  \end{proof}
Notwithstanding the hypothesis of the Theorem above are very strong, we have many examples of compact complex manifolds that satisfy them. Surely, on every compact K\"ahler manifold with $c_1(M)=0$,  we can consider a metric that satisfies the hypothesis of the Theorem \ref{thm:2.11}. In fact, thanks to Yau Theorem, see \cite[Theorem 2]{Yau}, on this type of manifold, we can always choose a Ricci-flat K\"ahler metric. In particular, this metric is balanced and its Chern scalar curvature is identically zero. In the non-K\"ahler case, we can construct balanced metrics with zero Chern scalar curvature on every compact quotient of a holomorphic Lie group. We know that these manifolds are the only compact  complex manifolds that are holomorphically parallelizable, see \cite[Theorem 1]{Wa}. Moreover, every holomorphically parallelizable manifold admits a Chern-flat Hermitian metric, see \cite[Theorem 3]{Bo}. Such metric is balanced and its Chern scalar curvature is equal to zero. More examples are conjectured in \cite[Conjecture 4.1]{Tos}.

Theorem \ref{thm:2.11} allows us to formulate an equivalent condition so that a smooth function changing sign on $M$ can be Chern scalar curvature of a metric conformally equivalent to a metric satisfying the hypothesis of the Thereom \ref{thm:2.11}.\begin{prop}\label{prop:2.12}Let $M^n$ be a connected, compact complex manifold with $n\ge 2$. Suppose that there exists a balanced metric $\omega$ such that $S^{Ch}(\omega)=0$. Then, a function $g\in C^{\infty}(M,\R)$ that changes sign is the Chern scalar curvature of a metric conformally equivalent to $\omega$ if and only if there exists $\varphi\in Aut(M)$ such that\[\int_Mg\circ\varphi\frac{\omega^n}{n!}<0.\]\end{prop}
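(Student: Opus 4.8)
The plan is to reduce the notion ``conformally equivalent to $\omega$'' to the already understood notion ``conformal to $\omega$'', and then to invoke Theorem~\ref{thm:2.11} together with the necessary condition of the $\Gamma(\{\omega\})=0$ analysis. The key identity is that, for $\varphi\in Aut(M)$ and $w\in C^{\infty}(M,\R)$,
\[
\exp\!\left(\frac{2w}{n}\right)\varphi^*\omega=\varphi^*\!\left(\exp\!\left(\frac{2(w\circ\varphi^{-1})}{n}\right)\omega\right),
\]
combined with the naturality of the Chern connection, and hence of the Chern scalar curvature: since a biholomorphism $\varphi$ preserves the complex structure, it intertwines the Chern connections of $\eta$ and $\varphi^*\eta$, and tracing commutes with pullback, so $S^{Ch}(\varphi^*\eta)=S^{Ch}(\eta)\circ\varphi$ for every Hermitian metric $\eta$. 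Writing $v=w\circ\varphi^{-1}$, the equation $S^{Ch}(\exp(\tfrac{2w}{n})\varphi^*\omega)=g$ is therefore equivalent to $S^{Ch}(\exp(\tfrac{2v}{n})\omega)=g\circ\varphi^{-1}$. Since $Aut(M)$ is a group, this yields: $g$ is the Chern scalar curvature of a metric conformally equivalent to $\omega$ if and only if there exists $\psi\in Aut(M)$ such that $g\circ\psi$ is the Chern scalar curvature of a metric conformal to $\omega$. I will also use that precomposition with the diffeomorphism $\psi$ does not affect the sign-changing property, so $g\circ\psi$ changes sign whenever $g$ does.

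For the implication ``$\Leftarrow$'', assume $\int_M(g\circ\varphi)\tfrac{\omega^n}{n!}<0$ for some $\varphi\in Aut(M)$. Then $g\circ\varphi$ changes sign and has negative mean value with respect to $\omega$, so Theorem~\ref{thm:2.11}, applied to the balanced metric $\omega$ with $S^{Ch}(\omega)=0$, gives a metric conformal to $\omega$ with Chern scalar curvature $g\circ\varphi$. By the reduction above with $\psi=\varphi$, $g$ is the Chern scalar curvature of a metric conformally equivalent to $\omega$.

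For the implication ``$\Rightarrow$'', assume $g=S^{Ch}(\exp(\tfrac{2w}{n})\varphi^*\omega)$ for some $\varphi\in Aut(M)$ and $w\in C^{\infty}(M,\R)$. By the reduction and Proposition~\ref{prop:1.6} (together with $S^{Ch}(\omega)=0$), the function $h:=g\circ\varphi^{-1}$ and $v:=w\circ\varphi^{-1}\in C^{\infty}(M,\R)$ satisfy equation~(\ref{eq:10}), i.e. $\Delta_\omega^{Ch}v=h\exp(\tfrac{2v}{n})$. I then reproduce the necessary condition of the $\Gamma(\{\omega\})=0$ case: multiply by $\exp(-\tfrac{2v}{n})$, use formula~(\ref{eq:4}), and integrate over $M$. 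Because $\omega$ is balanced, its eccentricity function is $f_0\equiv1$ and $\Delta_\omega^{Ch}=\Delta_\omega$ is self-adjoint, so the $\Delta_\omega^{Ch}$-term integrates to $0$, leaving
\[
-\frac{4}{n}\int_M\exp\!\left(-\frac{2v}{n}\right)\omega(dv,dv)\,\frac{\omega^n}{n!}=\int_M h\,\frac{\omega^n}{n!}.
\]
The left-hand side is $\le0$, and it vanishes only if $v$ is constant, in which case $h\equiv0$. Since $h=g\circ\varphi^{-1}$ changes sign, $h\not\equiv0$, so $\int_M(g\circ\varphi^{-1})\tfrac{\omega^n}{n!}<0$; taking $\psi=\varphi^{-1}\in Aut(M)$ completes the proof.

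I do not expect a serious obstacle here: the analytic content is entirely absorbed by Theorem~\ref{thm:2.11} and by Proposition~\ref{prop:1.6}, and the remaining work is bookkeeping — carefully checking the naturality identity $S^{Ch}(\varphi^*\eta)=S^{Ch}(\eta)\circ\varphi$ and the conformal pullback identity, and making sure the necessary condition is applied with the \emph{strict} inequality, which is exactly the point where the sign-changing hypothesis ($g\not\equiv0$) is used.
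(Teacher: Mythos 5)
Your proof is correct and follows exactly the route the paper intends: the paper states Proposition \ref{prop:2.12} without a written proof, as an immediate consequence of Theorem \ref{thm:2.11} (for sufficiency, after pulling back by the automorphism and using $S^{Ch}(\varphi^*\eta)=S^{Ch}(\eta)\circ\varphi$) and of the integral necessary condition obtained from formula (\ref{eq:4}) with $f_0\equiv 1$ in the balanced case (for necessity). Your write-up supplies precisely these two ingredients, including the correct justification of the strict inequality via the sign-changing hypothesis, so there is nothing to object to.
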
 
\begin{rmk} Unfortunately, we have some examples of functions that cannot be the Chern scalar curvature of a metric conformally equivalent to one satisfying the hypothesis of Theorem \ref{thm:2.11}.  Let $M=\C^n/\Lambda$ be a  complex torus, where $\Lambda$ is a discrete translation subgroup of rank $2n$ acting freely on $\C^n$. We know that \[Aut(M)=GL(\Lambda)\ltimes M,\] where $GL(\Lambda)=\{A\in GL(n, \C) \ | \ A(\Lambda)=\Lambda\}.$ We can consider  the flat metric $\omega$ on $M$. Clearly, $\omega$ is K\"ahler and its Chern scalar curvature is zero. On the other hand, we observe that \[\int_Mg\circ\varphi\frac{\omega^n}{n!}=\int_Mg\frac{(\varphi^{-1})^*\omega^n}{n!}, \ \  \ \forall \varphi\in Aut(M).\] Clearly, if $x\in M$, then $x^*\omega=\omega$, instead, if $A\in GL(\Lambda)$, we have $A^*\omega^n=\lvert\det(A)\rvert\omega^n.$ Moreover, we have that, $\forall \varphi\in Aut(M),$ there exists a unique $x\in M$ and a unique $A\in GL(\Lambda)$ such that $\varphi=x\circ A.$ So \[\int_Mg\circ\varphi\frac{\omega^n}{n!}=\lvert\det(A^{-1})\rvert\int_M g\frac{\omega^n}{n!}.\]Then, a function $g\in C^{\infty}(M,\R)$ that changes sign on $M$ is the Chern scalar curvature of a metric conformally equivalent to $\omega$ if and only if $\int_Mg\frac{\omega^n}{n!}<0$ and then, using Theorem \ref{thm:2.11}, if and only if $g$ is the Chern scalar curvature of a metric conformal to $\omega$.  So, we can construct functions changing sign on $M$ but which are not Chern scalar curvature of any metric conformally equivalent to $\omega$. For example, consider \[\Lambda=\bigoplus_{i=1}^{2n}\Z e_i,\] where $\{e_1,\ldots, e_{2n}\}$ is the standard basis of $\R^{2n}$. If we consider the function $g(x_1,\ldots, x_{2n})=\cos(2\pi x_1)$, we note easily that $\int_Mg\frac{\omega^n}{n!}=0.$ So, $g$ cannot be the Chern scalar curvature of any metric conformal equivalent to $\omega.$ \end{rmk}
 \subsection{Case $\Gamma(\{\omega\})>0$} 
 This last case is the most difficult. We already said that, in this case, we do not know  if there exists a metric with  constant and positive Chern scalar curvature within the  conformal class.  On the other hand, the condition (\ref{eq:2}) guarantees that $g$ must be positive somewhere on $M$.  
The  result we prove below follows from an application of the Implicit function Theorem.\begin{prop}\label{prop:2.14} Let $M^n$ be a connected, compact complex manifold with $n\ge2$ endowed with a Hermitian metric $\omega$ such that $\Gamma(\{\omega\})>0.$ Let $\eta\in \{\omega\}$ be the only Gauduchon metric with volume 1. Then, there exists $\varepsilon>0$, depending on $M$ and $\eta$, such that, if $\lVert S^{Ch}(\eta)\rVert_{C^{0,\alpha}(M)},\lVert g \rVert_{C^{0,\alpha}(M)}<\varepsilon$, where $g\in C^{\infty}(M,\R)$ is positive somewhere on $M$ and $\alpha\in (0,1)$, then $g$ is the Chern scalar curvature of a metric conformal to $\omega.$\end{prop}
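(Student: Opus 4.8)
The plan is to solve the prescribed curvature equation directly within the conformal class $\{\omega\}=\{\eta\}$ by the inverse function theorem, perturbing off the metric $\eta$ itself. By Proposition~\ref{prop:1.6}, the metric $\exp(\tfrac{2u}{n})\eta$ has Chern scalar curvature $g$ exactly when
\[
\Delta_{\eta}^{Ch}u+S^{Ch}(\eta)=g\exp\!\left(\tfrac{2u}{n}\right),
\]
that is, when $G(u)=g$, where $G\colon C^{2,\alpha}(M)\to C^{0,\alpha}(M)$ is $G(u)=\exp(-\tfrac{2u}{n})\bigl(\Delta_{\eta}^{Ch}u+S^{Ch}(\eta)\bigr)$. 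One has $G(0)=S^{Ch}(\eta)$, and differentiating at $u=0$ gives $d_0G\,\dot u=\Delta_{\eta}^{Ch}\dot u-\tfrac{2}{n}S^{Ch}(\eta)\,\dot u$. The whole argument hinges on showing that $d_0G$ is an isomorphism $C^{2,\alpha}(M)\xrightarrow{\ \sim\ }C^{0,\alpha}(M)$ as soon as $\lVert S^{Ch}(\eta)\rVert_{C^{0,\alpha}(M)}$ is small in terms of $M$ and $\eta$, together with a bound on $\lVert(d_0G)^{-1}\rVert$ depending only on those data.

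I expect this to be the main obstacle. The operator $d_0G$ is elliptic, and adding a zeroth-order term does not change the index, so by Remark~\ref{rmk:1.3} it is Fredholm of index $0$; hence it suffices to prove injectivity. The point is that $\Delta_{\eta}^{Ch}$ alone is not injective — its kernel is $\R$ — so one must use the small zeroth-order term, and this is precisely where $\Gamma(\{\omega\})>0$ enters. Since $\eta$ is Gauduchon its eccentricity function is $1$, so $\ker((\Delta_{\eta}^{Ch})^{*})=\R$; consequently $\int_M\Delta_{\eta}^{Ch}\varphi\,\tfrac{\eta^n}{n!}=0$ for all $\varphi$, and $\Delta_{\eta}^{Ch}$ restricts to an isomorphism of the subspace of $C^{2,\alpha}(M)$ of functions with vanishing $\eta$-mean onto the analogous subspace of $C^{0,\alpha}(M)$. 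Writing $u=\bar u+v$ with $\bar u=\int_M u\,\tfrac{\eta^n}{n!}$ and $v$ of vanishing $\eta$-mean, if $d_0G\,u=0$ then integrating over $M$ gives $\Gamma(\{\omega\})\,\bar u=-\int_M S^{Ch}(\eta)\,v\,\tfrac{\eta^n}{n!}$, so $\lvert\bar u\rvert\le\Gamma(\{\omega\})^{-1}\lVert S^{Ch}(\eta)\rVert_{C^{0}(M)}\lVert v\rVert_{C^{0}(M)}$, while the Schauder estimate on the vanishing-mean subspace gives $\lVert v\rVert_{C^{2,\alpha}(M)}\le C(M,\eta)\,\lVert S^{Ch}(\eta)\rVert_{C^{0,\alpha}(M)}\bigl(\lvert\bar u\rvert+\lVert v\rVert_{C^{2,\alpha}(M)}\bigr)$. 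Combining the two forces $v\equiv0$, hence $\bar u=0$, once $\lVert S^{Ch}(\eta)\rVert_{C^{0,\alpha}(M)}<\varepsilon_{0}(M,\eta)$; the same computation with a nonzero right-hand side bounds $\lVert(d_0G)^{-1}\rVert$ in terms of $M$ and $\eta$ only.

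Granting the invertibility of $d_0G$, the inverse function theorem shows that $G$ is a local diffeomorphism near $0$, so its image contains a ball $\{g:\lVert g-S^{Ch}(\eta)\rVert_{C^{0,\alpha}(M)}<\delta\}$; feeding the bound on $\lVert(d_0G)^{-1}\rVert$ and a bound on the second derivative of $G$ on a fixed small ball around $0$ into a quantitative version of the theorem, one gets $\delta=\delta(M,\eta)>0$. Now set $\varepsilon:=\tfrac12\min\{\delta,\varepsilon_{0}\}$. If $\lVert S^{Ch}(\eta)\rVert_{C^{0,\alpha}(M)}<\varepsilon$ and $\lVert g\rVert_{C^{0,\alpha}(M)}<\varepsilon$, then $\lVert g-S^{Ch}(\eta)\rVert_{C^{0,\alpha}(M)}<2\varepsilon\le\delta$, so $g=G(u)$ for some $u\in C^{2,\alpha}(M)$. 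Since $g\in C^{\infty}(M,\R)$, bootstrapping $\Delta_{\eta}^{Ch}u=g\exp(\tfrac{2u}{n})-S^{Ch}(\eta)$ with the Calder\'on--Zygmund and Schauder estimates upgrades $u$ to $C^{\infty}(M,\R)$, and then $\exp(\tfrac{2u}{n})\eta$ is a Hermitian metric conformal to $\omega$ with Chern scalar curvature $g$. Shrinking $\varepsilon$ further so that $\delta<\Gamma(\{\omega\})$, every $g$ in the above ball satisfies $\max_M g\ge\Gamma(\{\omega\})-\delta>0$ (because $S^{Ch}(\eta)$ has $\eta$-mean $\Gamma(\{\omega\})>0$), so the hypothesis that $g$ be positive somewhere is automatically met and needs no separate use in the argument.
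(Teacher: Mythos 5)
Your proof is essentially correct, but it takes a genuinely different route from the paper. The paper does not linearize the curvature map at $u=0$ with $S^{Ch}(\eta)$ present: it applies the implicit function theorem to the three-variable map $F(u,g,S)=\Delta_{\eta}^{Ch}u+S-g\exp(\frac{2u}{n})$ at the trivial point $(0,0,0)$, working on the Banach manifolds $X=\{(u,g,S):\int_M(S-g\exp(\frac{2u}{n}))\frac{\eta^n}{n!}=0\}$ and $Y=\{\psi:\int_M\psi\frac{\eta^n}{n!}=0\}$, so that the integral compatibility condition is built into the spaces and the relevant linearization is just the Laplacian of $\eta$ mapping onto mean-zero functions; its invertibility there is immediate, no smallness is needed for it, and all of $\varepsilon$ comes from the size of the parameter neighbourhood $U\ni(g,S)$, with $\Gamma(\{\omega\})>0$ never entering the analysis. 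You instead perturb off $u=0$ for the fixed background $\eta$, so your linearization $d_0G=\Delta_{\eta}^{Ch}-\frac{2}{n}S^{Ch}(\eta)\,\mathrm{Id}$ is not automatically invertible, and your mean/mean-zero splitting --- using $\int_M\Delta_{\eta}^{Ch}\varphi\,\frac{\eta^n}{n!}=0$ (Gauduchon), index zero, the Schauder estimate on the mean-zero subspace, and crucially $\Gamma(\{\omega\})>0$ --- is exactly the extra work your route requires; it is correct, and the subsequent quantitative inverse function theorem argument closes the proof. What each approach buys: the paper's constrained-manifold setup sidesteps the kernel and cokernel of the Laplacian (the constants) and makes invertibility trivial, yielding an $\varepsilon$ that does not involve $\Gamma^{-1}$; your argument is more self-contained and shows explicitly where positivity of the Gauduchon degree is used, but your threshold $\varepsilon_0$ degenerates as $\Gamma(\{\omega\})\to0$ or as $S^{Ch}(\eta)$ oscillates relative to its mean, so your range of applicability is in principle smaller. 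Two minor points: to avoid circularity you should state the trivial case split --- if $\lVert S^{Ch}(\eta)\rVert_{C^{0,\alpha}(M)}\ge\varepsilon_0(M,\eta)$, take $\varepsilon=\varepsilon_0$ and the statement is vacuous, otherwise $d_0G$ is invertible and $\delta$ is defined --- which mirrors the paper's own remark that the result may a priori be empty; and your observation that ``$g$ positive somewhere'' is automatic is consistent with the paper, where that hypothesis only reflects the necessary condition (2) and is carried along in the definition of the neighbourhood rather than used analytically.
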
\begin{proof} Fix $\alpha\in (0,1)$ and consider the Banach manifolds: \[X=\left\{(u,g,S)\in C^{2,\alpha}(M)\times C^{0,\alpha}(M)^2 \ \middle| \ \int_M\left(S-g\exp\left(\frac{2u}{n}\right)\right)\frac{\eta^n}{n!}=0\right\}\]\[Y=\left\{\psi\in C^{0,\alpha}(M) \ \middle| \int_M\psi\frac{\eta^n}{n!}=0\right\}.\] Consider the map $F\colon X\to Y$ such that\[ F(u,g,S)=\Delta_{\eta}^{Ch}u+S-g\exp\left(\frac{2u}{n}\right), \ \ \forall (u,g,S)\in X.\] Clearly, $F$ is $C^1$. Observe that $F(0,0,0)=0$ and that\[\frac{\partial F}{\partial u}_{|(0,0,0)}(\phi)=\Delta_{\eta}\phi.\] is invertible on $T_0Y\simeq Y.$ So, using the implicit function Theorem, there exists a neighbourhood $U\subset C^{0,\alpha}(M)^2$ of $(0,0)$ and a $C^1$ function $\Phi\colon U\to X$ such that \[F(\Phi(g,S),g,S)=0,\ \ \forall (g,S)\in U.\] Choose $\varepsilon>0$, depending only on $M$ and $\eta$, such that \[\left\{g\in C^{0,\alpha}(M) \ \middle| \  \max_Mg>0,  \ \lVert g \rVert_{C^{0,\alpha}(M)}<\varepsilon\right\}\times B_{C^{0,\alpha}(M)}(0,\varepsilon)\subseteq U.\] So, if $\eta $ is such that $\lVert S^{Ch}(\eta)\rVert_{C^{0,\alpha}(M)}<\varepsilon$ and $g\in C^{\infty}(M,\R)$ somewhere positive on $M$ and such that $\lVert g \rVert_{C^{0,\alpha}(M)}<\varepsilon$, we can choose $u=\Phi(g,S)$ and verify that $S^{Ch}(\exp\left(\frac{2u}{n}\right))=g$. By Schauder estimates, we obtain that $u\in C^{\infty}(M,\R).$ \end{proof}
The Proposition above is a generalization of \cite[Theorem 5.9]{AnCaSp}. We observe that the condition on $S^{Ch}(\eta)$ in Proposition \ref{prop:2.14} is expressed in terms of $\varepsilon$, which depends itself on $\eta$. So, a priori, the result may be empty. However, in \cite[Remark 5.10]{AnCaSp}, the authors present some  examples where the \cite[Theorem 5.9]{AnCaSp} can be applied. In these examples, Proposition \ref{prop:2.14} can also be applied.

\end{document}